\DeclareFontFamily{U}{shuffle}{}
\DeclareFontShape{U}{shuffle}{m}{n}{%
<5-8>shuffle7%
<8->shuffle10%
}{}
\DeclareSymbolFont{Shuffle}{U}{shuffle}{m}{n}
\DeclareMathSymbol\shuffle{\mathbin}{Shuffle}{"001}
\theoremstyle{plain}
\newtheorem*{thm:ADinBA}{Theorem \ref{thm:ADinBA}}
\newtheorem*{thm:main}{Theorem \ref{thm:main}}
\newtheorem*{thm:symmetries}{Theorem \ref{thm:symmetries}}
\newtheorem{Thm}{Theorem}[section]
\newtheorem{Cor}[Thm]{Corollary}
\newtheorem{Prop}[Thm]{Proposition}
\newtheorem*{Claim}{Claim}
\theoremstyle{definition}
\theoremstyle{remark}
\newtheorem{Rem}{Remark}[section]
\newtheorem{Ex}{Example}[section]
\definecolor{violet}{RGB}{204,52,255}
\numberwithin{equation}{section}
\newcommand{\ptd}{\mathsf{ptd}}
\newcommand{\murder}{\digamma}
\begin{document}
\title{Statistics on parallelogram polyominoes and a $q,t$-analogue of the Narayana numbers}
\author[J-C Aval]{Jean-Christophe Aval$^{*}$} \thanks{$^{*}$ Supported by ANR -- PSYCO project
(ANR-11-JS02-001).}
\address{LaBRI, Universit\'e de Bordeaux, CNRS, 351 cours de la Lib\'eration, 33405 Talence, France}\email{aval@labri.fr}\email{borgne@labri.fr}
\author[M. D'Adderio]{Michele D'Adderio}
\address{Universit\'e Libre de Bruxelles (ULB)\\D\'epartement de Math\'ematique\\ Boulevard du Triomphe, B-1050 Bruxelles\\ Belgium}\email{mdadderi@ulb.ac.be}
\author[M. Dukes]{\\Mark Dukes}
\address{University of Strathclyde\\Department of Computer and Information Sciences\\ 16 Richmond Street, Glasgow G1 1XQ\\ Scotland, United Kingdom}\email{mark.dukes@strath.ac.uk}
\author[A. Hicks]{Angela Hicks$^{\dag}$} \thanks{$^{\dag}$ Supported by NSF grant DMS=0800273.}
\address{UCSD\\ Department of Mathematics\\ 9500 Gilman Drive\\ 92093-0112 La Jolla, USA}\email{ashicks@math.ucsd.edu}
\author[Y. Le Borgne]{Yvan Le Borgne}

\keywords{$q,t$-Narayana, parallelogram polyominoes, parking functions.}

\begin{abstract}
We study the statistics $\mathsf{area}$, $\mathsf{bounce}$ and
$\mathsf{dinv}$ on the set of parallelogram polyominoes having a
rectangular $m$ times $n$ bounding box.
We show that the bi-statistics $(\mathsf{area},\mathsf{bounce})$ and
$(\mathsf{area},\mathsf{dinv})$ give rise to the same
$q,t$-analogue of Narayana numbers which was introduced by two of
the authors in \cite{dukesleborgne}.
We prove the main conjectures of that paper:
the $q,t$-Narayana polynomials are symmetric in both $q$ and $t$, and $m$ and $n$.
This is accomplished by providing a symmetric functions interpretation of the $q,t$-Narayana polynomials
which relates them to the famous diagonal harmonics.
\end{abstract}

\maketitle

\section{Introduction}
\label{sec:one}
A {\it{parallelogram polyomino}} having an $m\times n$ {\it{bounding box}} is a polyomino in a rectangle consisting of $m\times n$ cells
that is formed by cutting out two (possibly empty) non-touching standard Young tableaux which have corners at $(0,n)$ and $(m,0)$.
An example of a parallelogram polyomino having a $12\times 7$ bounding box is illustrated in Figure \ref{fig1}.
Let $\mathrm{Polyo}_{m,n}$ be the set of all {parallelogram polyominoes} having a rectangular $m\times n$ bounding box.
The cardinality of $\mathrm{Polyo}_{m,n}$ is known to be
$N(m+n-1,m)$ where for positive integers $a$ and $b$,
$$N(a,b):=\frac{1}{a}{a \choose b} {a \choose b-1}$$
are the famous \textit{Narayana numbers}.
Two authors of this work introduced \cite{dukesleborgne}
two statistics on these combinatorial objects, $\mathsf{area}$ and
$\mathsf{bounce}$, which led to a $q,t$-analogue of the Narayana
numbers $N(m+n-1,m)$, namely
$$
\mathsf{Nara}_{m,n}(q,t):=\sum_{P
\in \mathrm{Polyo}_{m,n}
}q^{\mathsf{area}(P)}t^{\mathsf{bounce}(P)},
$$
that they called the $q,t$-Narayana polynomial.
In that same work it was conjectured that the $q,t$-Narayana polynomials are
symmetric in $q$ and $t$, and as expressions were also symmetric
in $m$ and $n$.
We introduce a new statistic $\mathsf{dinv}$ which
gives a new $q,t$-analogue of the same numbers
$$
\widetilde{\mathsf{Nara}}_{m,n}(q,t):=\sum_{P
}q^{\mathsf{dinv}(P)}t^{\mathsf{area}(P)}.
$$

The following theorem establishes a relation between these two
polynomials.

\begin{thm:ADinBA}
For all $m\geq 1$ and $n\geq 1$, we have
$$
\mathsf{Nara}_{m,n}(q,t)=\widetilde{\mathsf{Nara}}_{n,m}(q,t).
$$
\end{thm:ADinBA}

We give two proofs of this result, one using an explicit
bijection, and another one using a recursion.
The main result of this paper is the proof of the symmetries
conjectured in \cite{dukesleborgne}.

\begin{thm:symmetries}
For all $m\geq 1$ and $n\geq 1$, we have
$$ \mathsf{Nara}_{m,n}(q,t)=\mathsf{Nara}_{m,n}(t,q) $$
and
$$ \mathsf{Nara}_{m,n}(q,t)=\mathsf{Nara}_{n,m}(q,t).  $$
In particular
$$ \mathsf{Nara}_{m,n}(q,t)=\widetilde{\mathsf{Nara}}_{m,n}(q,t).  $$
\end{thm:symmetries}
In order to prove this result, we will give a symmetric functions
interpretation of our $q,t$-Narayana numbers:
\begin{thm:main}
For all $m\geq 1$ and $n\geq 1$ we have
$$
\mathsf{Nara}_{m,n}(q,t)=(qt)^{m+n-1}\cdot \langle \nabla
e_{m+n-2},h_{m-1} h_{n-1} \rangle ,
$$
where $e_k$ and $h_k$ are the elementary and the homogeneous
symmetric functions of degree $k$ respectively, $\nabla$ is the
well known nabla operator introduced by Bergeron and Garsia (see
\cite[Section 9.6]{bergeron}), and the scalar product is the usual
Hall inner product on symmetric functions.
\end{thm:main}
This result establishes a remarkable link between the
$q,t$-Narayana polynomials and the well-known \textit{diagonal
harmonics} $DH_n$, since $\nabla e_n$ is the Frobenius
characteristic of this important module of the symmetric group
$\mathfrak{S}_n$, as shown by Haiman in \cite{haiman}.

Haglund \cite{haglund} gave a combinatorial interpretation of the
polynomial $\langle \nabla e_{m+n-2},h_{m-1} h_{n-1} \rangle$ in
terms of parking functions.
In fact Haglund's result would be an
easy consequence of the famous \textit{shuffle conjecture}, which
predicts a combinatorial interpretation of $\nabla e_n$ in terms
of parking functions (see \cite[Chapter 6]{haglundbook}), if a proof of
it could be found.

In order to prove Theorem \ref{thm:main}, we use the results of
Section \ref{sec:five}, proving that the combinatorial polynomials in 
Haglund's result and our $q,t$-Narayana polynomials
both satisfy the same recursion.
This paper is organized in the following way:
\begin{itemize}
\item In Section \ref{sec:two} we define three statistics on parallelogram polyominoes
    and two $q,t$-analogues of Narayana numbers.
\item In Section \ref{sec:three} we establish a bijection between our
parallelogram polyominoes and a set of Dyck paths. We classify those words that are area words of members of $\mathrm{Polyo}_{m,n}$. Area words are important in the definition of several statistics mentioned in Section \ref{sec:two}.
    \item In Section \ref{sec:four} we present a bijection from
    $\mathrm{Polyo}_{m,n}$ to $\mathrm{Polyo}_{n,m}$ which sends the
    bi-statistic $(\mathsf{area},\mathsf{bounce})$ to the
    bi-statistic $(\mathsf{dinv},\mathsf{area})$, thereby establishing
    Theorem \ref{thm:ADinBA}.
    \item In Section \ref{sec:five} we prove a recursion satisfied by
    both of our $q,t$-Narayana polynomials, which gives another proof of Theorem
    \ref{thm:ADinBA}.
    \item In Section \ref{sec:six} we provide the necessary background to
    state Theorem \ref{thm:main}, and we show how Theorem \ref{thm:symmetries} follows from it.
    Theorem \ref{thm:main} is then proven.
\end{itemize}

\section{Three statistics on parallelogram polyominoes}
\label{sec:two}
We may give an alternative characterisation of parallelogram polyominoes in terms of non-intersecting paths in the plane.
This alternative charaterisation will prove useful in defining the statistics and mappings used in the paper.

Consider a rectangular grid in $\mathbb{Z}^2$ of width $m$ and
height $n$. On this grid consider two paths, both starting from
the Southwest corner and arriving at the Northeast corner,
travelling on the grid, performing only North or East steps, with
the further restriction that they touch each other only at the
starting point and at the ending point. Such a pair of paths
uniquely defines a parallelogram polyomino. The region between the
two paths is called the \emph{interior} of the (parallelogram)
polyomino. The two paths defining the parallelogram polyomino of
Figure \ref{fig1} are coloured in red and green, and the interior
has been shadowed.

\begin{figure}[h]
\includegraphics[scale=0.5]{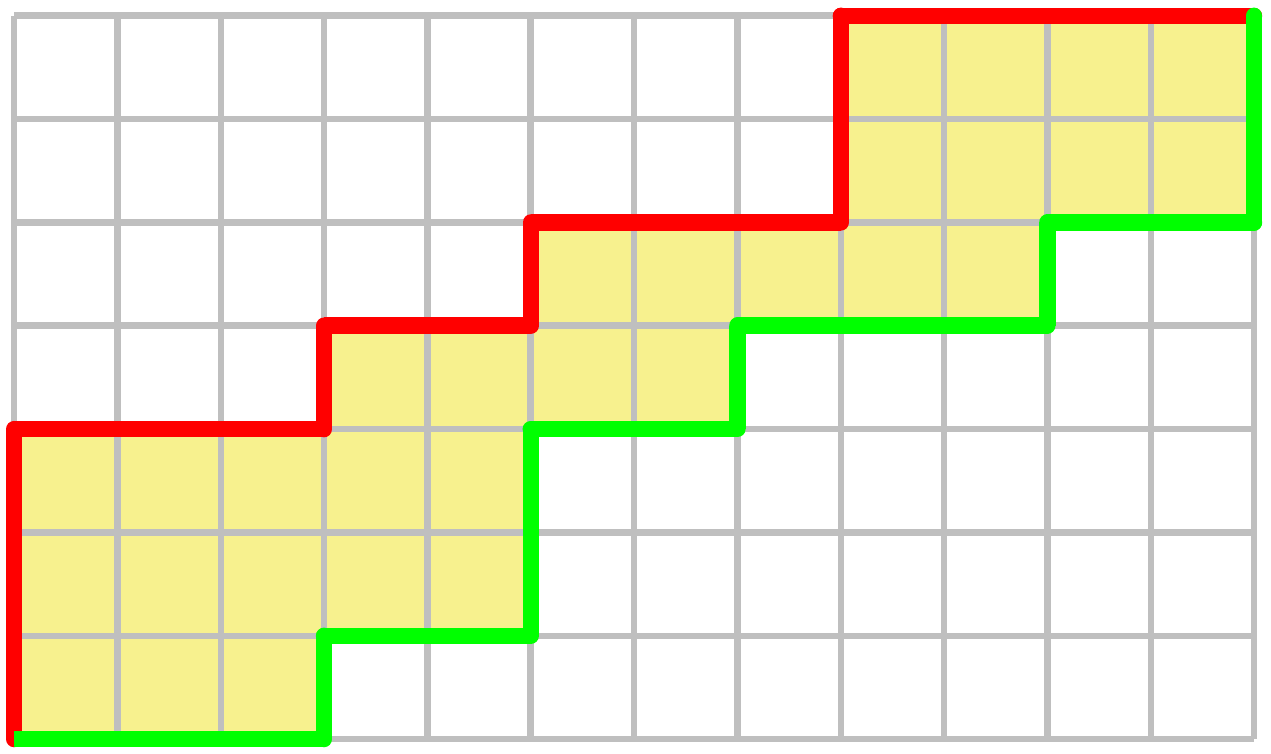}
\caption{\label{fig1}
A parallelogram polyomino having a $12$ times $7$ bounding box.}
\end{figure}

In what follows we will encode a parallelogram polyomino as an \emph{area word} consisting of
natural numbers (\emph{unbarred numbers}) and natural numbers with a bar on top (\emph{barred numbers}), in the
following way.
We will label every North step of the upper (red) path with a barred
number, and every East step of the lower (green) path
with an unbarred number. This is done in two stages.

First, for each East step of the lower path we draw a line
starting with the East endpoint and going Northwest until reaching
the upper path: we label this step with the number of squares
crossed by this line.
Second, we label each North step of the upper path with the number
of squares in the interior of the polyomino to the East of it
which were not crossed by any of the lines that we drew during the
previous stage.
An example of this labelling is shown in Figure \ref{fig2}, where we put a
black dot in the non-crossed squares.

\begin{figure}[h]
\includegraphics[scale=0.5]{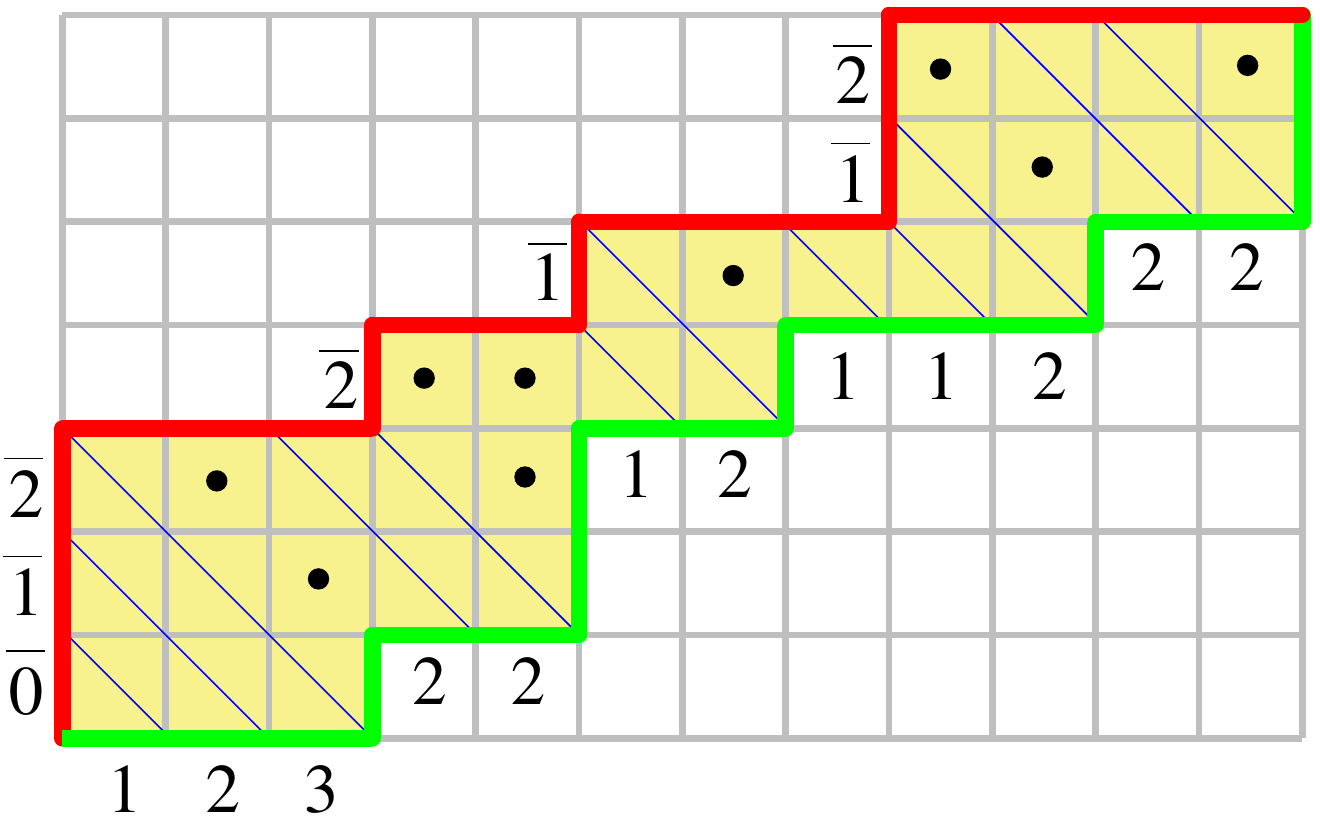}
\caption{\label{fig2}
The parallelogram polyomino of Figure~\ref{fig1} with its perimeter labelled.}
\end{figure}

Once we have done this labelling, we read the labels in the
following order: starting from Southwest and going to Northeast
imagine moving a straight line of slope $-1$ over the polyomino.
When we encounter vertical steps of the upper path or horizontal
steps of the lower path we write the corresponding labels. If we
encounter both types of steps at the same time then we write the
label of the upper path first. The area word of the example in
Figure \ref{fig2} is
$\overline{0}1\overline{1}2\overline{2}322\overline{2}1\overline{1}211\overline{1}2\overline{2}22$.

Notice that the sum of these numbers (disregarding the bars) gives
the $\mathsf{area}$ of the polyomino, which is the number of
squares between the two paths. This is the first of the statistics
that are relevant to us. In the example the $\mathsf{area}$ is 30.

Next we will define the $\mathsf{dinv}$ statistic.
Consider the total order on the labels
$$\overline{0}<1<\overline{1}<2<\overline{2}<3<\overline{3}<4<\overline{4}<\cdots.$$
Given a polyomino with area word $a_1a_2\dots a_k$, we define its
$\mathsf{dinv}$ as the number of pairs $a_i,a_j$ with $i<j$ and
$a_j$ is the immediate successor of $a_i$ in the fixed order. In
the example of Figure \ref{fig2}, the number of such pairs
containing $\overline{0}$ is $4$ and the $\mathsf{dinv}$ of the
polyomino is 35.

The last statistic that we introduce is the $\mathsf{bounce}$.
Consider the following path in a given polyomino: begin with a
single East step from the Southwest corner, and then move North
until reaching the East endpoint of a horizontal step of the upper
path; at this point we ``bounce'', i.e. we start moving East,
until we reach the North endpoint of a vertical step of the lower
path; at this point we ``bounce'' again, start moving North, and
we repeat this procedure until we reach the Northeast corner. This
path is called the {\it{bounce path}}.

Once we have the bounce path, starting from Southwest corner, we
label each step of the first sequence of vertical steps with $1$,
then each step of the second of such sequences with $2$, and so
on; we label each step of the first sequence of horizontal steps
with $\overline{0}$, then each step of the second of such
sequences with $\overline{1}$, and so on. See Figure \ref{fig3}
for an example of this labelling.

\begin{figure}[h]
\includegraphics[scale=0.5]{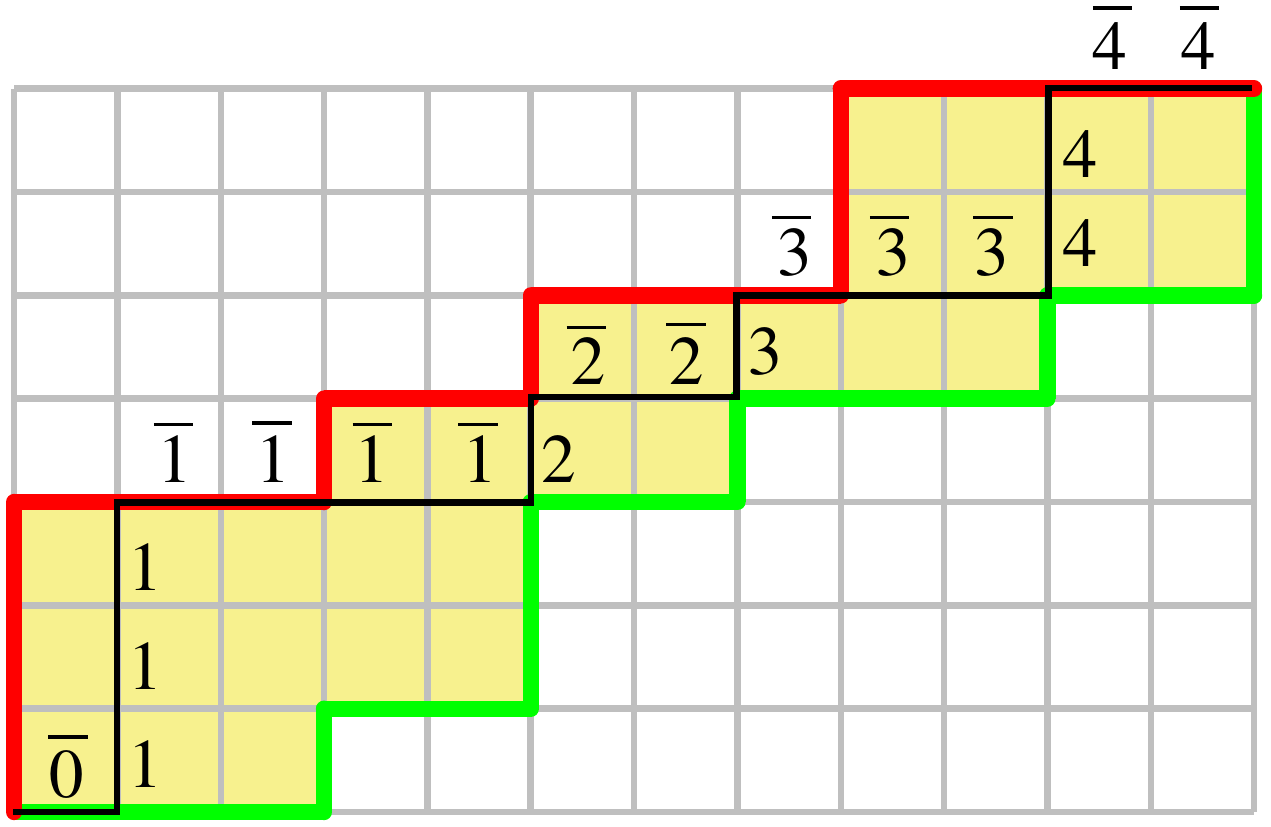}
\caption{\label{fig3}
The labelled bounce path.}
\end{figure}

The $\mathsf{bounce}$ of a polyomino is the sum of
the labels of its bounce path, disregarding the bars.
The $\mathsf{bounce}$ of the parallelogram polyomino in Figure \ref{fig3} is 41.

These three statistics give rise to a pair of bi-statistics on $\mathrm{Polyo}_{m,n}$ whose generating functions
$$ \mathsf{Nara}_{m,n}(q,t):=\sum_{P\in
\mathrm{Polyo}_{m,n}}t^{\mathsf{area}(P)}q^{\mathsf{dinv}(P)} $$
and
$$ \widetilde{\mathsf{Nara}}_{m,n}(q,t):=\sum_{P\in
\mathrm{Polyo}_{m,n}}t^{\mathsf{bounce}(P)}q^{\mathsf{area}(P)}.  $$
are studied in this paper.
The polynomials $\mathsf{Nara}_{m,n}(q,t)$ where first introduced
in \cite{dukesleborgne} by two of the authors of the present work.
In the same paper, it was conjectured that these were polynomials
symmetric in $q$ and $t$, and as expressions symmetric in $m$ and
$n$.

\section{A bijection with Dyck paths}
\label{sec:three} In this section we present a bijection $\ptd$
between the set $\mathrm{Polyo}_{m,n}$ and a set of Dyck paths
having length $2(m+n)$. We then prove a result which shows how to
read the area word of a parallelogram polyomino from its
corresponding Dyck path under $\ptd$. From this we will get a
characterization of the area words of polyominoes from
$\mathrm{Polyo}_{m,n}$ which will be used in the proof of Theorem
\ref{thm:ADinBA}.  We finally observe that this description
provides a way to computationally work with the set of area words
of $\mathrm{Polyo}_{m,n}$ by working with the easier to construct
set of Dyck paths.

Recall that a \textit{Dyck path} can be thought of as a path
consisting of Northeast or Southeast steps lying between parallel
horizontal lines, such that the path starts with a Northeast step,
it never crosses the starting horizontal line, and returns to it
at the end. Its \textit{length} is simply the number of its steps
it contains. Figure \ref{fig4} shows an example of a Dyck path
having length 38.

Notice that a Dyck path is uniquely determined by the sequence of rises and
falls we encounter as we move along the path from left to right.

We will next describe a bijection between the polyominos in $\mathrm{Polyo}_{m,n}$
and the set of Dyck path of length $2(m+n)$ with
$m$ rises in even positions and $n$ rises in odd positions, which
do not return to the starting horizontal line until the end.
This bijection appears in \cite{delestviennot} in a somewhat different language.

The idea is to read the steps of the upper and lower paths of a parallogram polyomino
$P$ alternatingly and form the Dyck path $D=\ptd(P)$ by using two rules.
We perform a rise of the Dyck path for either a North step of the upper path or an East step of the lower path,
and perform a descent of the Dyck path for either an East step of the upper path or a North step of the lower path.
Using this construction, the polyomino in Figure \ref{fig1} is sent to the Dyck path shown in Figure \ref{fig4}.

\begin{figure}[h]
\includegraphics[scale=0.75]{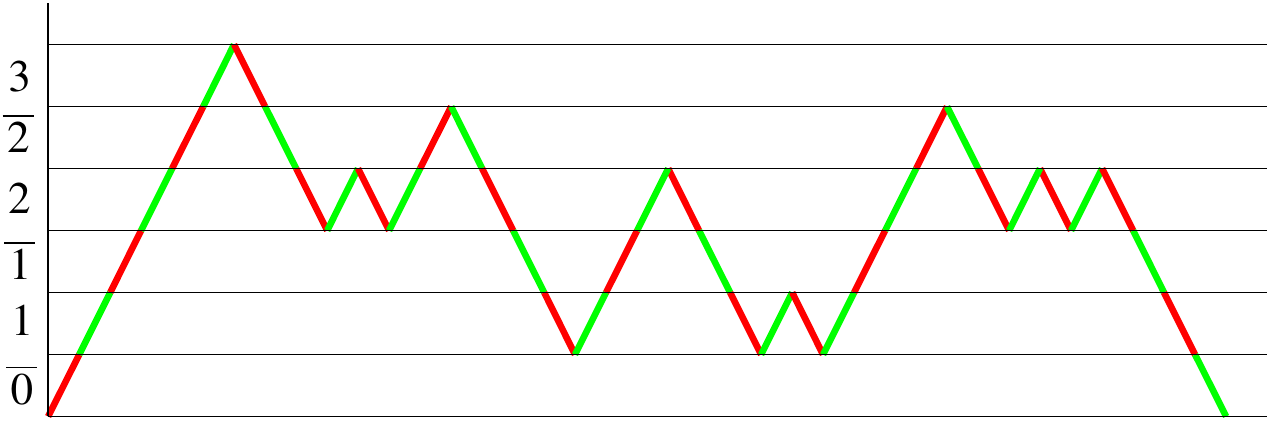}
\caption{\label{fig4}
The Dyck path corresponding to the polyomino of Figure \ref{fig1}.}
\end{figure}

It should be clear that this mapping sends the parallelogram polyomines to the stated subset of Dyck paths.
The fact that the Dyck path does not return to the starting horizontal line before the end corresponds to the
fact that the upper and the lower paths do not intersect each other between the starting and ending points.
The inverse operation is straightforward to describe and verify.

We can easily read the area word of a parallelogram polyomino $P$
from the corresponding Dyck path $\ptd(P)$ as we will now
describe. Consider the Dyck path in Figure \ref{fig4} when reading
the next proposition. It consists of Northeast and Southeast steps
lying between parallel lines which determine certain rows.

\begin{Prop}
Let $P \in \mathrm{Polyo}_{m,n}$ with $D=\ptd(P)$.
If we label the rows of $D$ with
$\overline{0},1,\overline{1},2,\overline{2},3,\overline{3},\cdots$
from bottom to top, then reading the labels of the rows of the
rises from left to right we get the area word of the polyomino $P$.
\end{Prop}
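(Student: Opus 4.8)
The plan is to track, step by step, how the bijection $\ptd$ translates the two-stage labelling of the perimeter of $P$ (North steps of the upper path, East steps of the lower path) into the row-labels of the corresponding rises of $D = \ptd(P)$. First I would recall that under $\ptd$ the steps of the upper and lower paths of $P$ are read alternately: a North step of the upper path or an East step of the lower path becomes a \emph{rise} of $D$, while an East step of the upper path or a North step of the lower path becomes a \emph{descent}. Consequently, the sequence of rises of $D$, read left to right, is exactly the sequence of North-steps-of-the-upper-path and East-steps-of-the-lower-path, read in the diagonal ($-1$-slope) order used to define the area word of $P$. So it suffices to check, for each such step, that its area-word label equals the label of the row in which the corresponding rise of $D$ sits.

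The key computation is to relate the \emph{height} of a rise of $D$ to the geometric quantity being counted on $P$. I would argue that, at any point along the reading, the height of the Dyck path above its base line measures (up to the fixed shift by $\overline{0},1,\overline{1},\dots$) the number of unit squares of $P$ that are currently ``cut'' by the diagonal sweep line but not yet accounted for — i.e. for a North step of the upper path, the interior squares to its East not crossed by any Northwest line from the lower path, and for an East step of the lower path, the number of squares its Northwest line crosses before hitting the upper path. Concretely: each East step of the lower path opens a Northwest line that will eventually be closed at the upper path; in $D$ this corresponds to a rise (opening) that is matched by a descent (the moment the sweep meets the upper path). The current height of $D$ therefore counts exactly the lines currently ``in flight'' plus the appropriate offset, and a short case analysis on whether the next rise comes from the upper or the lower path shows that reading off the row-label $\overline{0},1,\overline{1},2,\dots$ reproduces exactly the two rules of the labelling procedure from Section~\ref{sec:two}. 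I would formalize this as an invariant maintained as the diagonal sweep advances, and verify it is preserved across each of the four possible step-types (North/East of upper, North/East of lower).

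The main obstacle I expect is bookkeeping the parity/offset b{}correctly: the labels alternate barred and unbarred ($\overline{0}, 1, \overline{1}, 2, \dots$), and one must be careful that a rise coming from the lower path (an East step) always lands in an \emph{unbarred} row and a rise coming from the upper path (a North step) always lands in a \emph{barred} row — this should follow from the alternating reading built into $\ptd$ together with the convention that when the sweep meets both a vertical step of the upper path and a horizontal step of the lower path simultaneously the upper label is written first. A clean way to handle this is to observe that the barred/unbarred row parity of a rise is determined by the parity of its position among all rises, which by construction of $\ptd$ matches whether the step came from the upper or lower path; then the numerical value is handled by the height invariant above. Once the invariant and the parity check are in place, the proposition follows by inducting along the diagonal sweep, reading Figure~\ref{fig4} against Figure~\ref{fig2} as the guiding example.
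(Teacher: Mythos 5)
Your plan follows essentially the same route as the paper: reduce the statement to checking, step by step along the alternating reading built into $\ptd$, that the row of each rise equals the area-word label of the corresponding perimeter step, and do this by induction with a case analysis on the four combinations of step types. The difference is in how the invariant is phrased. The paper makes it concrete via the \emph{partial box} (the smallest rectangle containing the two partial paths): the label of a North step of the upper path is its distance to the right edge of the previous partial box, and the label of an East step of the lower path is its distance to the upper edge of the current partial box; since after $k$ steps of each path the height of $D$ is the sum of these two distances (and the two distances are equal, each completed pair of steps changing the height by $0$ or $\pm 2$), the row labels come out right. Your ``lines in flight'' formulation is plausibly equivalent, but it is left as a heuristic: the invariant is never stated precisely, and proving that the rise--descent structure of $D$ actually tracks the Northwest diagonals and the non-crossed squares is exactly the content that the paper's four-case induction supplies. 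So no work is saved; the proposal is a correct outline of the paper's proof with its core verification still to be done.

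There is also one concretely false claim: the barred/unbarred parity of the row of a rise is \emph{not} determined by the parity of its position among all rises. In the paper's running example the area word contains the consecutive unbarred letters $2\,2$, i.e.\ two consecutive rises of $D$ both landing in an odd row, so rises whose positions among the rises have opposite parities can land in rows of the same parity. What is true, and what you need, is that the parity is determined by the position of the step among \emph{all} steps of $D$: each completed pair of steps changes the height by $0$ or $\pm 2$, so the height is even before every odd-position step (which comes from the upper path) and odd before every even-position step (which comes from the lower path); hence upper-path rises start in even (barred) rows and lower-path rises in odd (unbarred) rows. With that correction, and with the height invariant written down and verified in the four cases, your argument becomes the paper's proof.
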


To prove this proposition, we will use induction on the number of
pairs of steps of the upper and lower paths starting form the
Southwest corner. At each step of this induction we will consider
the \textit{partial box} that includes the partial paths, i.e. the
smallest rectangle that includes them (see Figure \ref{fig4_1}).
Then we imagine to complete the paths inside the partial box by
moving along the edges to reach the Northeast corner, and we read
the labels of the resulting polyomino on the partial paths.

\begin{figure}[h]
\includegraphics[scale=0.5]{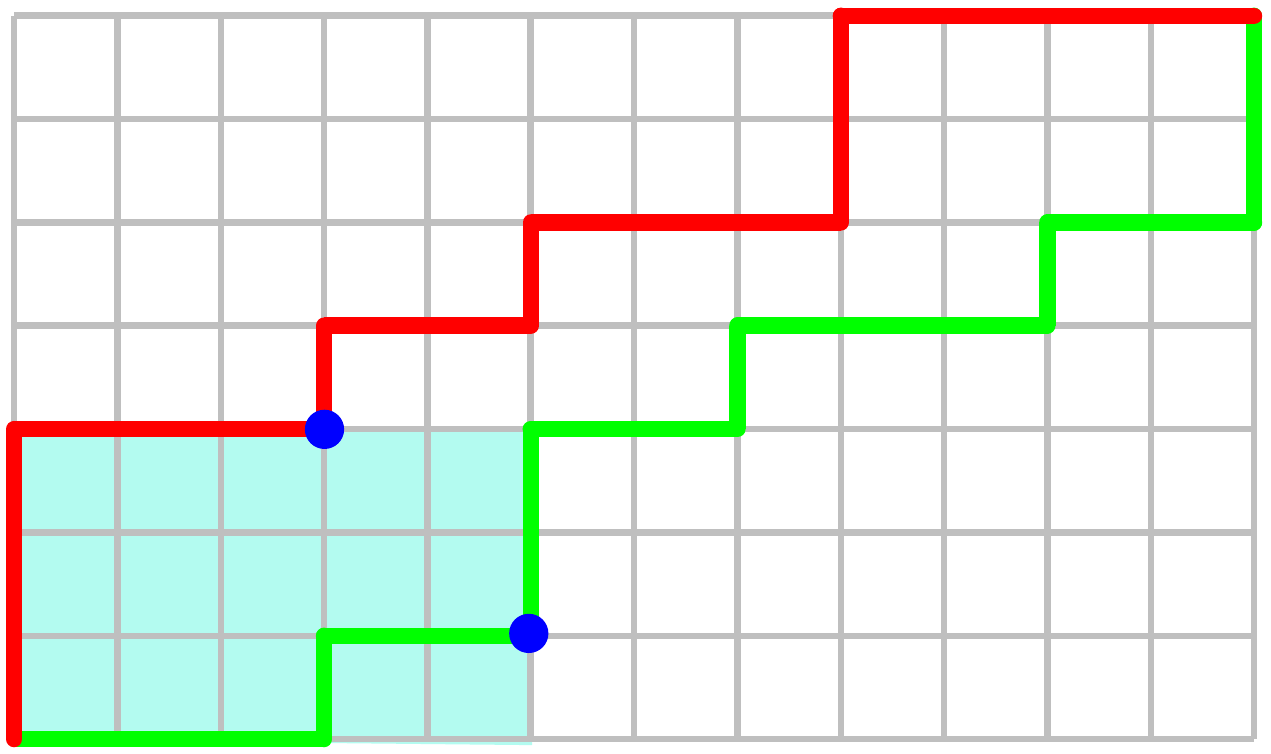}
\caption{\label{fig4_1} The highlighted area is the partial box
after the first $6$ steps of both the green path and the red
path.}
\end{figure}

We claim that this gives exactly the corresponding part of the
area word of the original polyomino.

The key observation is the following claim.
\begin{Claim}
In the last pair of steps, the label of a North step of the upper
path is always the distance from the right edge of the previous
partial box, with a bar on top; while the label of a East step of
the lower path is always the distance from the upper edge of the
partial box.
\end{Claim}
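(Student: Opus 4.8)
The plan is to argue directly from the geometry of the partial boxes, tracking how the completed polyomino changes as the last pair of steps is appended. Write $B_k$ for the partial box obtained after reading $k$ pairs of steps, put its south-west corner at the origin, and call its north-east corner $(w_k,h_k)$; let $\widehat P_k$ be the polyomino obtained by completing the two partial paths to $(w_k,h_k)$ along the boundary of $B_k$ --- the upper path by a (possibly empty) vertical run followed by a horizontal run, the lower path by a horizontal run followed by a (possibly empty) vertical run. The one bookkeeping fact that drives everything is this: after $k$ steps the upper path sits at a point $(x_U,h_k)$ with $x_U\le w_k$ and the lower path at a point $(w_k,y_L)$ with $y_L\le h_k$, because the upper path always realises the current maximal height and the lower path the current maximal width and the two paths never cross; since each path has taken exactly $k$ steps, $x_U+h_k=k=w_k+y_L$, hence
\[ w_k-x_U = h_k-y_L . \]
Write $c$ for this common value; it is this identity that makes the two halves of the Claim match up numerically.

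Suppose first that the last step of the lower path is an East step $E_k$. It ends at $(w_k,y_L)$, a point on the right edge of $B_k$, and the line it emits runs up and to the left along the anti-diagonal $x+y=k$. Along any north/east lattice path the quantity $x+y$ strictly increases, and along the upper path of $\widehat P_k$ it first attains the value $k$ at the point $(x_U,h_k)$ where the first $k$ upper steps end (the horizontal completion only raises $x+y$ further); therefore the emitted line meets the upper path exactly at $(x_U,h_k)$ and nowhere sooner, so it crosses precisely $c=h_k-y_L$ cells. That is the distance from $E_k$ to the top edge of $B_k$, which is one half of the Claim.

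Suppose now that the last step of the upper path is a North step $N_k$; then $h_k=h_{k-1}+1$ and $N_k$ occupies the top row $h_{k-1}\le y\le h_k$ of $B_k$ at abscissa $x_U$. The interior of $\widehat P_k$ in that row runs from $x=x_U$ to $x=w_k$, so there are $c$ interior cells east of $N_k$; the point is that at most one of them is crossed by a line. Indeed a line emitted by a lower East step read before the last pair lies on an anti-diagonal $x+y=i$ with $i\le k-1$, and since in the present case the first $k$ upper steps stay at height $\le h_{k-1}$ apart from their endpoint, such a line meets the upper path at height $\le h_{k-1}$ and never enters the top row; the only possible incursion into the top row is the line emitted by the last lower step in the case that this step is itself an East step, and by the previous paragraph the last cell it crosses is $[x_U,x_U+1]\times[h_{k-1},h_k]$, the cell immediately east of $N_k$. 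Hence the number of un-crossed interior cells east of $N_k$ is $c$ when the last lower step is North and $c-1$ when it is East; using $w_k-x_U=c$ together with $w_k=w_{k-1}$ in the first sub-case and $w_k=w_{k-1}+1$ in the second, this number is $w_{k-1}-x_U$, the distance from $N_k$ to the right edge of the previous box. When the last upper step is East and the last lower step is North no new label is created, so the Claim is vacuous there; these cases are exhaustive.

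I expect the main obstacle to be the \emph{non-interference} step used in the previous paragraph: that in $\widehat P_k$ every line drawn before the last pair of steps behaves exactly as it does in $\widehat P_{k-1}$, and in particular stays strictly below the top row of $B_k$. Making this airtight requires identifying $\widehat P_{k-1}$ inside $\widehat P_k$ --- it is $\widehat P_k$ with the top row and/or the rightmost column peeled off, according to which of the two last steps enlarges the box --- and checking that deleting those cells does not move any earlier line; at bottom this is the statement that the interior of a parallelogram polyomino sits, row by row, between its two defining paths. One also has to keep the four cases for the last pair straight and dispose of the degenerate configurations (a partial path already running along an edge, or the two paths on the point of meeting, which happens only when $k=m+n$), but none of these affects the counts above.
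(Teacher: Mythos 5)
Your argument is correct, and it reaches the Claim by a genuinely different route from the paper. The paper proves the Claim by induction on the number of pairs of steps, splitting the last pair into four cases (upper/lower moving E/N, N/N, E/E, N/E) and checking in each case both that the new label has the stated value and that all earlier labels are undisturbed as the partial box grows. You instead compute the two labels directly in the completed polyomino $\widehat P_k$ using the invariant $x+y$ along the two paths: the identity $w_k-x_U=h_k-y_L$ coming from $x_U+h_k=k=w_k+y_L$ makes the East-step count (cells crossed by the anti-diagonal $x+y=k$) and the North-step count (uncrossed interior cells in the top row) both equal to the same quantity $c$, and the observation that a line emitted on $x+y=i$ with $i\le k-1$ terminates at the endpoint of the $i$-th upper step, hence at height at most $h_{k-1}$, is exactly what is needed to show that earlier lines never invade the top row. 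Your two sub-cases (last lower step North versus East) then reproduce the paper's distinction between its Cases 2 and 4, and the ``minus $1$'' of the paper's Case 4 is explained conceptually: the unique top-row cell that gets crossed is the one cut by the line of the simultaneous East step. What your approach does not deliver for free is the other half of the paper's induction --- that the labels of all \emph{earlier} steps are unchanged when the box grows --- which the enclosing Proposition needs and which the paper's four-case analysis establishes in the same breath; you correctly flag this as the remaining work, and it would have to be supplied (for instance by the same $x+y$ bookkeeping applied to each earlier line) before the Proposition itself is complete, though the Claim as stated is fully proved by what you wrote.
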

After proving this claim, it remains only to observe that the
distance from the right edge of the previous partial box of the
North steps of the upper path corresponds to the number of odd
rows from the bottom line in the corresponding Dyck path; while
the distance from the upper edge of the partial box of the East
steps of the lower path corresponds to the number of even rows
from the bottom line. This completes the proof of the proposition.
\begin{proof}[Proof of the Claim]
At the beginning the upper path is forced to go North and the
lower path is forced to go East. The partial box at this point
consists of a single square, and we clearly have the partial area
word $\overline{0}1$: this is always the beginning of an area word
for a polyonimo, and it corresponds to the first two rises in the
corresponding Dyck path, as it should be.

Now suppose that everything works up to a certain pair of steps, and let us
make the next pair of steps. We have four cases (see Figure \ref{fig5}):

\begin{figure}[h]
\includegraphics{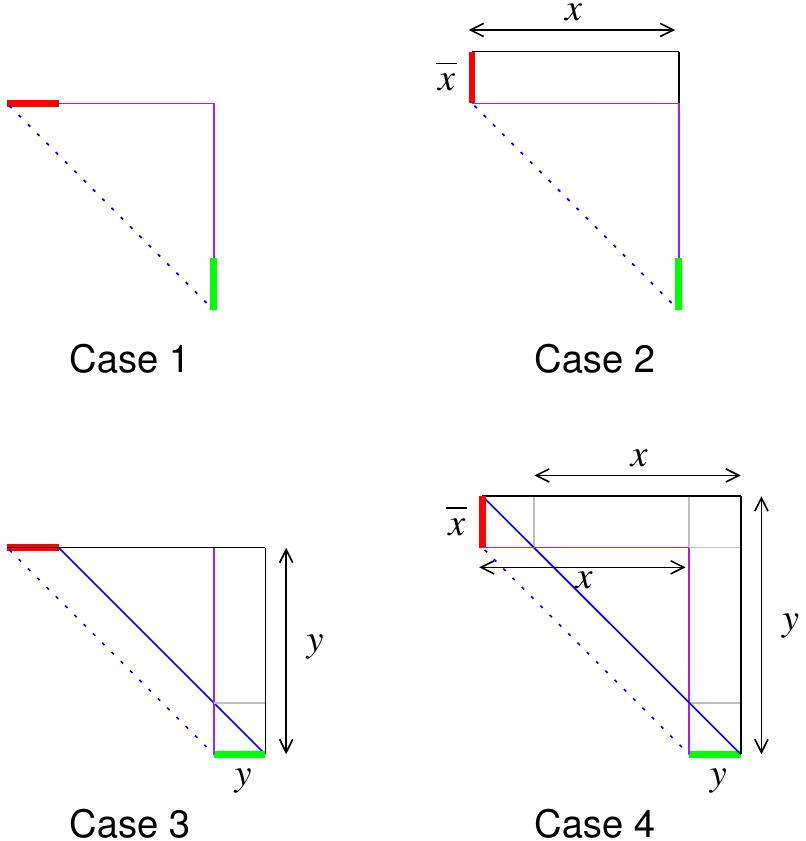}
\caption{\label{fig5}
The four cases. The previous partial box is violet, while
the new one is black.}
\end{figure}

\underline{Case 1}: The upper path moves East, and the lower path
moves North. Then there are no labels to add, and the previous
labels remain unchanged, since the partial box remains unchanged.

\underline{Case 2}: Both the upper and the lower paths move North.
Then the label of the North step of the upper path is clearly the
distance from the right edge of the partial box, which is the same
distance from the one of the previous partial box. The previous
labels clearly remain unchanged.

\underline{Case 3}: Both the upper and the lower paths move East.
Then the label of the East step of the lower path is the
distance to the upper edge of the partial box. The previous
labels of the upper path remain unchanged, since in each row we
added just a box crossed by the diagonal corresponding to the new
East step of the lower path. The previous labels of the lower path
also remain unchanged, since we did not move the upper edge of the
partial box.

\underline{Case 4}: The upper path moves North, and the lower path
moves East. Then the label of the North step of the upper path is
the distance from the right edge of the partial box minus
1, since the first box becomes crossed by the diagonal of the East
step of the lower path. But this is equal to the distance
from the right edge of the previous partial box. The label
of the East step of the lower path is clearly the distance from it to
the upper edge of the partial box. The previous labels of the
upper path remain unchanged, since in each row we added just a box
crossed by the diagonal corresponding to the new East step of the
lower path. The previous labels of the lower path also remain
unchanged, since the diagonals of the previous horizontal steps
all hit the upper path in the same spots as before.
\end{proof}
As an immediate consequence, we get a characterization of the
words in the ordered alphabet
$\overline{0}<1<\overline{1}<2<\overline{2}<3<\overline{3}<\cdots$
which are area words of elements of $\mathrm{Polyo}_{m,n}$.

We state this characterization here as a corollary.
\begin{Cor} \label{cor:charactareawords}
Consider the alphabet
$\overline{0}<1<\overline{1}<2<\overline{2}<3<\overline{3}<\cdots$,
with the letters in the given order. A word $a_1a_2\cdots a_r$ in
this alphabet is the area word of an element of
$\mathrm{Polyo}_{m,n}$ if and only if the following conditions
hold:
\begin{enumerate}
    \item $a_1=\overline{0}$, and this is the only $\overline{0}$ that appears in the word;
    \item there are exactly $m$ of the $a_i$'s which
    are from the set of numbers without a bar $\{1,2,3,\dots\}$,
    and exactly $n$ of the $a_i$'s which are from the set of
    numbers with a bar
    $\{\overline{0},\overline{1},\overline{2},\dots\}$ (in particular $r=m+n$);
    \item for all $i=1,2,\dots,m+n-1$, the letter $a_{i+1}$ is
    less than or equal to the immediate successor of the letter
    $a_i$, in the given order on the alphabet.
\end{enumerate}
\end{Cor}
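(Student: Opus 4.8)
The plan is to read the corollary off directly from the Proposition together with the description of the image of $\ptd$. Recall that $\ptd$ is a bijection from $\mathrm{Polyo}_{m,n}$ onto the set of Dyck paths of length $2(m+n)$ having $m$ rises in even positions, $n$ rises in odd positions, and not returning to the starting line before the end; and that, by the Proposition, the area word of $P$ is obtained from $D=\ptd(P)$ by labelling the rows $\overline{0},1,\overline{1},2,\overline{2},\dots$ from the bottom and listing, left to right, the label of the row containing each rise. So it suffices to check that under this dictionary the defining conditions on $D$ translate into conditions (1)--(3).

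Encode such a Dyck path by the sequence $\ell_1,\dots,\ell_{m+n}$, where $\ell_i$ is the integer height of the bottom of the $i$-th rise; then $a_i$ is the letter labelling row $\ell_i$, so that an even $\ell_i$ yields a barred letter and an odd $\ell_i$ an unbarred one, and the immediate successor of $a_i$ in the alphabet is exactly the label of row $\ell_i+1$. Since the first step is a rise from the base line, $\ell_1=0$ and $a_1=\overline{0}$; moreover the path meets the base line only at its endpoints precisely when no later rise starts at height $0$, i.e. precisely when $\ell_i>0$ for all $i\ge 2$, i.e. precisely when $\overline{0}$ does not occur again --- this is (1). After the $i$-th rise the path is at height $\ell_i+1$ and makes some number $k\ge 0$ of falls before the next rise, so $\ell_{i+1}=\ell_i+1-k\le\ell_i+1$; this is exactly ``$a_{i+1}$ is at most the immediate successor of $a_i$'', which is (3). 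Conversely, from any sequence with $\ell_1=0$, $\ell_i>0$ for $i\ge2$, and $\ell_{i+1}\le\ell_i+1$ one builds back a Dyck path by inserting $\ell_i+1-\ell_{i+1}$ falls between the $i$-th and $(i+1)$-th rises and $\ell_{m+n}+1$ falls at the end; a telescoping count shows this uses $m+n$ falls altogether, the path stays weakly above the base line, and it touches that line only at its endpoints. Finally, a one-line index computation gives that the $i$-th rise sits in position $2i-1-\ell_i$ of $D$, whose parity is opposite to that of $\ell_i$; hence the odd-position rises are exactly those with $\ell_i$ even, contributing the barred letters, and the even-position rises those with $\ell_i$ odd, contributing the unbarred letters. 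Thus ``$m$ rises in even positions, $n$ in odd positions'' is precisely ``$m$ unbarred letters, $n$ barred letters'' (and hence $r=m+n$), which is (2).

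Assembling these translations, $a_1\cdots a_r$ satisfies (1)--(3) if and only if the associated sequence $(\ell_i)$ defines a Dyck path lying in the image of $\ptd$; by the bijectivity of $\ptd$ and the Proposition this happens if and only if $a_1\cdots a_r$ is the area word of some $P\in\mathrm{Polyo}_{m,n}$. The only delicate point is the parity bookkeeping in the last step --- reconciling the alternating upper/lower reading built into $\ptd$ with the even/odd structure of the row labels --- but once the position formula $2i-1-\ell_i$ is in hand it is immediate, and everything else is routine manipulation of Dyck paths.
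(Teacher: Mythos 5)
Your argument is correct and is exactly the verification the paper has in mind: the paper states the corollary as an ``immediate consequence'' of the Proposition together with the description of the image of $\ptd$ (Dyck paths of length $2(m+n)$ with $m$ rises in even positions, $n$ in odd positions, not returning to the base line before the end), and you simply make that translation explicit, including the parity computation $2i-1-\ell_i$ that the paper leaves unstated. No discrepancy with the paper's route; the details you supply (the telescoping fall count and the position/parity dictionary) all check out.
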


We mention here that this bijection also gives an easy way to
construct the polyomino from its area word: draw the corresponding
Dyck path (this is immediate), and then look at the odd and even
steps to construct the polyomino.

\section{The bi-statistics $(\mathsf{area},\mathsf{bounce})$ and $(\mathsf{dinv},\mathsf{area})$}
\label{sec:four}

This section is dedicated to proving the following theorem.
\begin{Thm} \label{thm:ADinBA}
For all $m\geq 1$ and $n\geq 1$,
$$\mathsf{Nara}_{m,n}(q,t)=\widetilde{\mathsf{Nara}}_{n,m}(q,t).$$
\end{Thm}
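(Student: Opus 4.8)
The plan is to construct an explicit bijection $\murder\colon \mathrm{Polyo}_{m,n}\to\mathrm{Polyo}_{n,m}$ which sends the bi-statistic $(\mathsf{area},\mathsf{bounce})$ to $(\mathsf{dinv},\mathsf{area})$; the theorem follows immediately by summing over the two sets. The natural way to build this map is to go through the area words, using the characterization of Corollary~\ref{cor:charactareawords}. Swapping the roles of $m$ and $n$ corresponds at the level of area words to exchanging the roles of the barred and unbarred letters, so the first thing I would do is make precise the involution on the alphabet that swaps $k\leftrightarrow\overline{k-1}$ (so $1\leftrightarrow\overline 0$, $\overline 1\leftrightarrow 2$, etc.), and check that it is order-reversing on the total order $\overline 0<1<\overline 1<2<\cdots$. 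Applied letterwise to an area word this almost produces an area word in $\mathrm{Polyo}_{n,m}$, but the condition ``$a_{i+1}\le$ successor of $a_i$'' becomes a condition about predecessors read left-to-right, i.e. an area word read backwards; so I expect $\murder$ to be the composition of the letterwise alphabet-swap with reversal of the word, suitably corrected so that it still begins with $\overline 0$ (the first/last letter needs separate bookkeeping, since $a_1=\overline 0$ is forced at both ends).

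The heart of the matter is then the statistic translation. By construction the unbarred letters of the new word are in bijection with the barred letters of the old, and the sum of letters (ignoring bars) is preserved up to the shift $k\mapsto k-1$ coming from $k\leftrightarrow\overline{k-1}$; so I would first show that $\mathsf{area}$ of the original polyomino, which is the sum of the unbarred entries of its area word (equivalently the sum over all entries since the barred sum is determined), gets sent to the sum of the barred entries of the image word, and that this latter quantity is exactly $\mathsf{bounce}$ of the image — this requires relating $\mathsf{bounce}$ to the area word, which the paper has set up via the labelled bounce path in Section~\ref{sec:two}. Dually, I would show that $\mathsf{bounce}$ of the original equals $\mathsf{dinv}$ of the image: $\mathsf{dinv}$ counts pairs $a_i,a_j$ with $i<j$ and $a_j$ the immediate successor of $a_i$, and under the order-reversing letterwise map plus reversal, ``successor'' pairs read left-to-right become ``successor'' pairs again (an order-reversing bijection on a linear order sends covering pairs to covering pairs), with the inversion in index order undone by the word-reversal, so $\mathsf{dinv}$ is literally transported to $\mathsf{dinv}$ of the doubly-transformed word; the remaining point is that $\mathsf{dinv}$ of the alphabet-swapped word coincides with $\mathsf{bounce}$ of the original, which is the genuinely combinatorial identity to be checked against the bounce-path description.

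The step I expect to be the main obstacle is precisely this last identification of $\mathsf{bounce}$ with a $\mathsf{dinv}$-type count on the transformed area word: $\mathsf{bounce}$ is defined geometrically through the bouncing path rather than directly from the area word, so I would need an intermediate lemma reading $\mathsf{bounce}$ off the area word (the sizes of the maximal ``staircase-like'' runs in the word correspond to the bounce segments, and their labels $1,2,\dots$ and $\overline 0,\overline 1,\dots$ match the successor-chains counted by $\mathsf{dinv}$). Once that dictionary is in place, the verification that $\murder$ is a well-defined bijection is routine bookkeeping via Corollary~\ref{cor:charactareawords} (the three conditions on area words are each manifestly preserved or exchanged under letterwise swap plus reversal), and the statistic claims follow. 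As the paper promises a second, recursion-based proof in Section~\ref{sec:five}, I would present the bijective proof here and merely flag that the recursion of the next section gives an independent verification.
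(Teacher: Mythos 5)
Your overall strategy (an explicit bijection $\mathrm{Polyo}_{m,n}\to\mathrm{Polyo}_{n,m}$ checked at the level of area words via Corollary \ref{cor:charactareawords}) is the same as the paper's, but the specific map you propose cannot work, and your bookkeeping of which statistic must land where is off. On the bookkeeping: to conclude $\mathsf{Nara}_{m,n}(q,t)=\widetilde{\mathsf{Nara}}_{n,m}(q,t)$ you need $\mathsf{area}(\murder(P))=\mathsf{bounce}(P)$ and $\mathsf{dinv}(\murder(P))=\mathsf{area}(P)$; you instead aim for $\mathsf{bounce}(\murder(P))=\mathsf{area}(P)$ and $\mathsf{dinv}(\murder(P))=\mathsf{bounce}(P)$, which would equate $\widetilde{\mathsf{Nara}}_{m,n}(q,t)$ with a $(\mathsf{dinv},\mathsf{bounce})$-generating function that is neither of the polynomials in the statement. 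Two smaller but real errors: the involution $k\leftrightarrow\overline{k-1}$ is \emph{not} order-reversing on $\overline{0}<1<\overline{1}<2<\cdots$ (it merely swaps adjacent elements; an ascending chain with a least element and no greatest element admits no order-reversing bijection at all), and $\mathsf{area}$ is the sum of \emph{all} entries of the area word, not of the unbarred ones only.

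The decisive obstruction is this: any map built from a letterwise relabelling of the alphabet followed by a rearrangement of positions (reversal, endpoint corrections, etc.) changes the sum of the letters by a quantity depending only on $m$ and $n$ --- your involution sends each unbarred letter to a barred letter of value one less and each barred letter to an unbarred letter of value one more, so the letter sum of the image is $\mathsf{area}(P)+n-m$. Since the area of the image polyomino \emph{is} the sum of the letters of its area word, such a map can never achieve $\mathsf{area}(\murder(P))=\mathsf{bounce}(P)$, because $\mathsf{bounce}(P)-\mathsf{area}(P)$ is not a function of $(m,n)$. Concretely, in $\mathrm{Polyo}_{2,2}$ the three area words are $\overline{0}1\overline{1}2$, $\overline{0}1\overline{1}1$, $\overline{0}11\overline{1}$ with $(\mathsf{area},\mathsf{dinv},\mathsf{bounce})$ equal to $(4,3,3)$, $(3,3,3)$, $(4,4,4)$ read correctly as $(4,3,3)$, $(3,3,3)$, $(3,4,4)$ respectively: the full square $\overline{0}1\overline{1}2$ must be sent to $\overline{0}11\overline{1}$ (area $3$, dinv $4$), yet your transformed multiset $\{1,\overline{0},2,\overline{1}\}$ has letter sum $4$, so no reordering of it has area $3$. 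The paper's $\murder$ avoids this precisely by \emph{not} being letterwise: the multiset of letters of the new area word is taken to be the multiset of bounce-path labels of $P$ (which forces $\mathsf{area}(\murder(P))=\mathsf{bounce}(P)$), and the interleaving of consecutive letter values is read off the segments of the upper and lower paths containing each turn of the bounce path (which encodes $\mathsf{area}(P)$ as $\mathsf{dinv}(\murder(P))$). That global use of the bounce-path decomposition is exactly the information a letterwise transformation of the area word cannot see, so the ``main obstacle'' you flag at the end is not a deferred verification but a genuine impossibility for the map as proposed.
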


In order to prove this theorem it suffices to give a bijection from
$\mathrm{Polyo}_{m,n}$ to $\mathrm{Polyo}_{n,m}$ which sends the
bi-statistic $(\mathsf{area},\mathsf{bounce})$ to the bi-statistic
$(\mathsf{dinv},\mathsf{area})$.

The bijection that we will now describe is similar in spirit to the one used
in the proof of the analogous \cite[Theorem 3.15]{haglundbook}.

Let $P \in \mathrm{Polyo}_{m,n}$. Starting from $P$, we read the
labels of its bounce path, getting a word consisting of barred and
unbarred numbers. Then, starting from the bottom-left corner, for
each turn of the bounce path, we look at the part of the path
(upper or lower) that includes it. For example in the polyomino of
Figure \ref{fig3}, the first turn of the bounce path is between
$\overline{0}$ and the next $1$ in the labelling of the bounce
path. The containing path consists of the first 4 steps (counted
from the Southwest corner) of the upper path. We label the
vertical steps of the containing path with the labels used for the
vertical steps in that part of the bounce path, and the horizontal
steps of the containing path with the labels used for the
horizontal steps in that part of the bounce path. See Figure
\ref{fig6} for an example.

\begin{figure}[h]
\begin{center}
\includegraphics[scale=0.5]{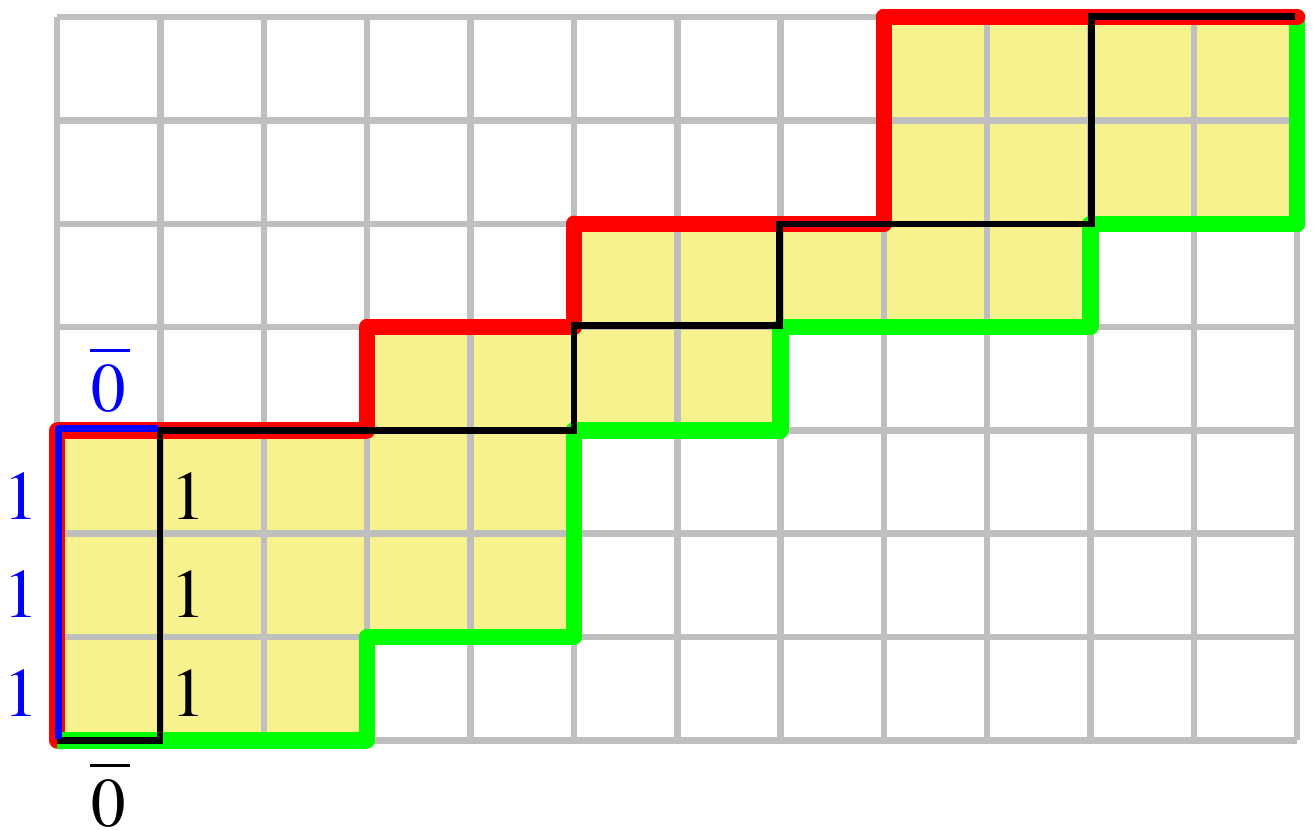}
\caption{\label{fig6}
The containing path and the new labels are blue.}
\end{center}
\end{figure}

We then read the new labels by following the containing path from
Northeast down to Southwest. In the example we read
$\textcolor{blue}{\overline{0}111}$.

During the remainder of the construction we will preserve the relative
positions of these labels.

We then repeat the algorithm with the second turn of the bounce path of $P$.
In the example
this occurs between the last $1$ and the first $\overline{1}$ in
the bounce path. This time the containing path consists of the
steps of the lower path between the second and the eighth. We
repeat the procedure that we used before, and the word that we get
reading the new labels will prescribe the relative positions of
the $1$'s and the $\overline{1}$'s. In the example (see Figure \ref{fig7})
we get the prescriptions
$\textcolor{violet}{11\overline{1}\overline{1}1\overline{1}\overline{1}}$.
This together with the other prescription gives a partial word
$\overline{0}11\overline{1}\overline{1}1\overline{1}\overline{1}$.

In general we will construct this partial word in a way that it
can be the word of a parallelogram polyomino while respecting all the
prescriptions. This will always be possible since the first step
of the containing path that we read will always be labelled by the
smallest of the two types of labels that we are considering: this
is due to the definition of the bounce path.

\begin{figure}[h]
\begin{center}
\includegraphics[scale=0.5]{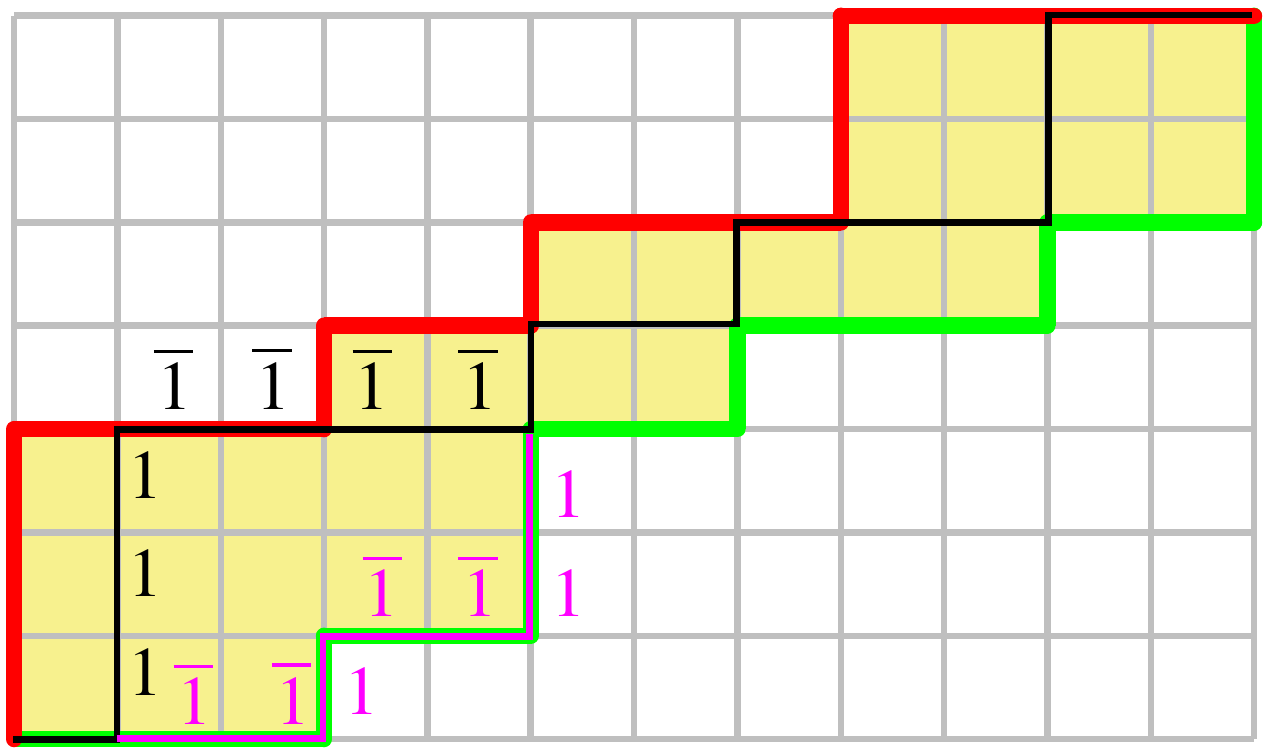}
\caption{\label{fig7}
The containing path and the new labels are violet.}
\end{center}
\end{figure}

We keep doing this until all the labels of the bounce path of $P$ have been included.
At the end we will get a word of another parallelogram polyomino.
In the example, at the next step we get the prescriptions
$\overline{1}\overline{1}2\overline{1}\overline{1}$, which gives
the partial word
$\overline{0}11\overline{1}\overline{1}21\overline{1}\overline{1}$;
then we get the prescriptions $2\overline{2}\overline{2}$,  which
gives the partial word
$\overline{0}11\overline{1}\overline{1}2\overline{2}\overline{2}1\overline{1}\overline{1}$;
then we get the prescriptions $\overline{2}\overline{2}3$,  which
gives the partial word
$\overline{0}11\overline{1}\overline{1}2\overline{2}\overline{2}31\overline{1}\overline{1}$;
then we get the prescriptions
$3\overline{3}\overline{3}\overline{3}$,  which gives the partial
word
$\overline{0}11\overline{1}\overline{1}2\overline{2}\overline{2}3\overline{3}\overline{3}\overline{3}1\overline{1}\overline{1}$;
then we get the prescriptions
$\overline{3}\overline{3}44\overline{3}$,  which gives the partial
word
$\overline{0}11\overline{1}\overline{1}2\overline{2}\overline{2}3\overline{3}\overline{3}44\overline{3}1\overline{1}\overline{1}$;
and finally we get the prescriptions $44\overline{4}\overline{4}$,
which gives the final word
$\overline{0}11\overline{1}\overline{1}2\overline{2}\overline{2}3\overline{3}\overline{3}44\overline{4}\overline{4}\overline{3}1\overline{1}\overline{1}$.

It is clear from the construction and the characterization of
Corollary \ref{cor:charactareawords}, that in this way we get the
area word of a polyomino $\murder(P)$ in $\mathrm{Polyo}_{n,m}$.
Moreover $\murder(P)$ clearly has $\mathsf{area}$ equal to the
$\mathsf{bounce}$ of the original polyomino $P$, again by
construction. Figure \ref{fig8} illustrates $\murder(P)$ for when
$P$ is the polyomino of Figure \ref{fig1}.

\begin{figure}[h]
\begin{center}
\includegraphics[scale=0.5]{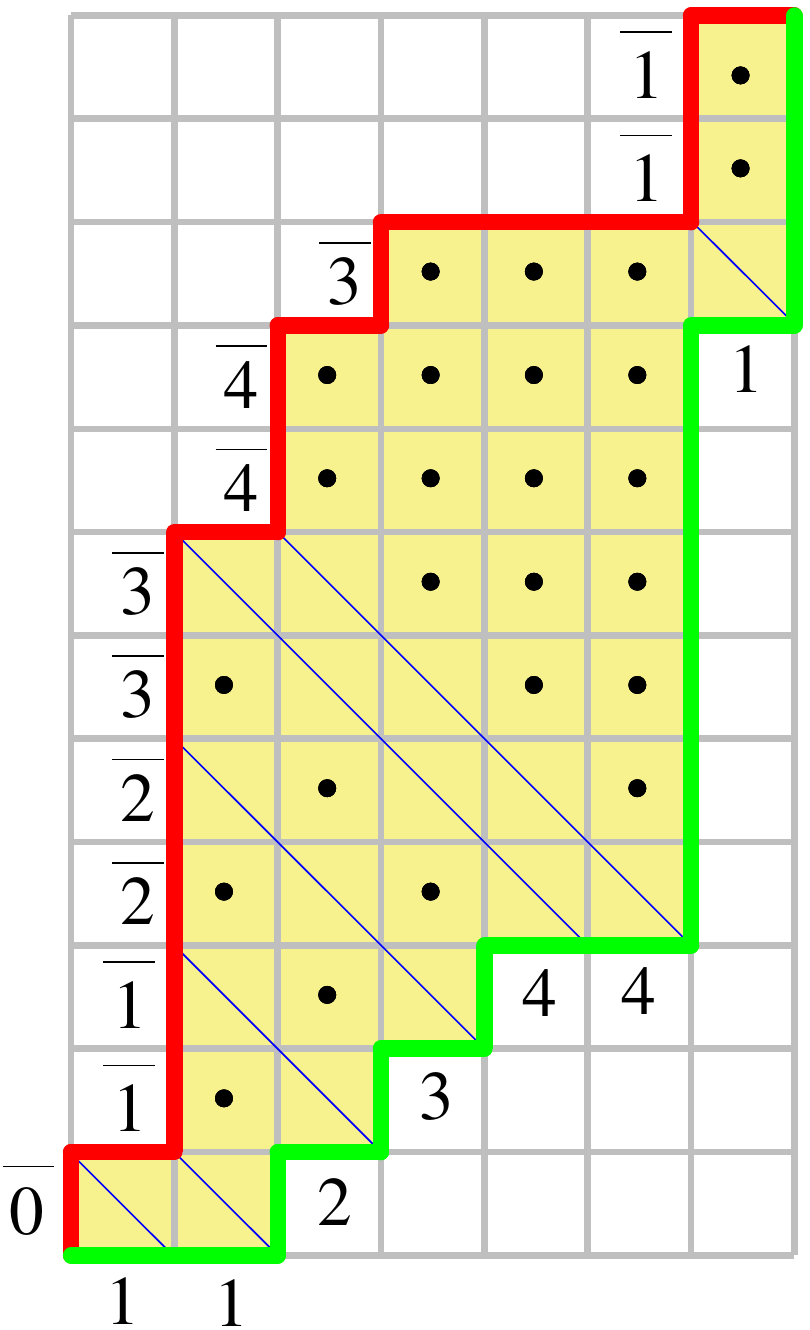}
\caption{\label{fig8}
The outcome of applying $\murder$ to the polyomino of Figure \ref{fig1}.}
\end{center}
\end{figure}

We need to show that the $\mathsf{dinv}$ of $\murder(P)$ is
equal to the $\mathsf{area}$ of $P$.

To see this, recall how we constructed the word of the new
polyomino: for consecutive types of labels, we prescribed the
relative positions by reading the corresponding containing path.
But in the containing path, those pairs of vertical and horizontal
steps which contribute to the $\mathsf{dinv}$ of the polyomino
correspond each to a square in its area.

It remains to show that $\murder$ is a bijection. To see this, we can
consider the inverse function: given a parallelogram polyomino,
write in weakly
increasing order its area word, and draw it as a bounce path with
labels. Then reading the relative positions of consecutive types
of labels you can reconstruct piecewise both the upper and lower
paths. This completes the proof.

Let us observe some remarkable consequences of this result. First
of all, notice that iterating this bijection a second time, we get
a bijection $\murder \circ \murder$ from $\mathrm{Polyo}_{m,n}$ to
itself which sends $\mathsf{bounce}$ to $\mathsf{dinv}$. Moreover,
applying the inverse and composing it with the flip along the
Southwest to Northeast line that pass through the Southwest corner
(which obviously preserves the $\mathsf{area}$) we get a bijection
from $\mathrm{Polyo}_{m,n}$ to itself which sends $\mathsf{dinv}$
to $\mathsf{area}$.

In conclusion, we see that all our three statistics are
equidistributed both inside the same $m$ times $n$ rectangle and
with the polyominoes in the flipped $n$ times $m$ rectangle.

\section{Recursions for $\mathsf{Nara}_{m,n}(q,t)$ and $\widetilde{\mathsf{Nara}}_{n,m}(q,t)$}
\label{sec:five}

In this section we prove that both $\mathsf{Nara}_{m,n}(q,t)$ and
$\widetilde{\mathsf{Nara}}_{n,m}(q,t)$ satisfy a certain
recursion. As an immediate byproduct we get another proof of the
identity
$\mathsf{Nara}_{m,n}(q,t)=\widetilde{\mathsf{Nara}}_{n,m}(q,t)$
stated in Theorem \ref{thm:ADinBA}.

Let $\widetilde{\mathrm{Polyo}}_{m,n}^{(r,s)}$ be the set of
polyominoes in $\mathrm{Polyo}_{m,n}$ whose labelled bounce path
has $r$ many $1$'s and $s$ many $\overline{1}$'s. In other words,
$r$ is the number of steps between the first and the second bounce
of the bounce path, while $s$ is the number of steps between the
second and the third bounce.
Define
$$
\widetilde{\mathsf{Nara}}_{m,n}^{(r,s)}(q,t):=\sum_{P\in
\widetilde{\mathrm{Polyo}}_{m,n}^{(r,s)}}t^{\mathsf{bounce}(P)}q^{\mathsf{area}(P)},
$$
so that $\widetilde{\mathsf{Nara}}_{m,n}(q,t)$ is the sum over all $r$
and $s$ of $\widetilde{\mathsf{Nara}}_{m,n}^{(r,s)}(q,t)$. Also,
we define the $q$-analogue of the non-negative integers by setting
$[0]_q:=1,$
and for all positive integers $n$,
$$
[n]_q:=\frac{1-q^n}{1-q}=1+q+q^2+\cdots+q^{n-1}.
$$
We define the $q$-analogue of the factorial of a non-negative
integer by setting
$ [0]_q!:=1, $
and for all positive integers $n$,
$$ [n]_q!:=\prod_{i=1}^{n}[i]_q.  $$
Finally, for $0 \leq k \leq n$,
$$ {n \brack k}_q :=\frac{[n]_q!}{[n-k]_q! [k]_q!} $$
denotes the $q$-analogue of the binomial
${n \choose k}$.

\begin{Thm}
For all $m,n,r$ and $s$ such that $1\leq r\leq n$ and
$0\leq s\leq m-1$, we have the recursion
$$
\widetilde{\mathsf{Nara}}_{m,n}^{(r,s)}(q,t)=t^{m+n-1}q^{r+s}\sum_{h=1}^{n-r}\sum_{k=0}^{m-s-1}
{s+r-1 \brack s}_q
{s+h-1 \brack h}_q
\widetilde{\mathsf{Nara}}_{m-s,n-r}^{(h,k)}(q,t),
$$
with initial conditions
$$
\widetilde{\mathsf{Nara}}_{m,n}^{(n,s)}(q,t)=\left\{\begin{array}{cc}
  (qt)^{m+n-1}
{m+n-2 \brack m-1}_q
& \text{if $s=m-1$} \\
  0 & \text{if $s<m-1$}, \\
\end{array}\right.
$$
and $\,\,\, \widetilde{\mathsf{Nara}}_{1,n}^{(r,0)}(q,t)=0\,\,\,$
for $\,\,\, r<n$.
\end{Thm}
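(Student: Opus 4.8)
The plan is to prove the recursion by a direct combinatorial decomposition of a polyomino $P \in \widetilde{\mathrm{Polyo}}_{m,n}^{(r,s)}$ according to the structure revealed by the first two bounces of its bounce path. Recall that the first bounce sequence consists of $r$ vertical steps (labelled $1$) and the second of $s$ horizontal steps (labelled $\overline{1}$), with $1 \le r \le n$ and $0 \le s \le m-1$. First I would describe how removing the ``first hook'' of the polyomino — the portion of the upper and lower paths traced out before the bounce path reaches its third bounce — yields a smaller polyomino $P' \in \mathrm{Polyo}_{m-s,n-r}$, and I would identify the parameters $(h,k)$ of $P'$: here $h$ is the number of $1$'s in the bounce path of $P'$ (equivalently, the length of the first bounce sequence of the truncated path), and $k$ the number of $\overline{1}$'s; the constraints $1 \le h \le n-r$ and $0 \le k \le m-s-1$ match the summation ranges.

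Next I would track how each of the three statistics transforms under this decomposition. The key bookkeeping is: (i) the initial East step plus the $r$ vertical steps plus the $s$ horizontal steps contribute a fixed amount to $\mathsf{bounce}$, and together with the $\mathsf{bounce}$ of $P'$ account for the $t^{m+n-1}$ factor — more precisely one must check that the bounce labels of $P'$, when shifted back up by the removed layers, reproduce exactly $\mathsf{bounce}(P) - (m+n-1)$ minus what is absorbed into $q^{r+s}$; (ii) the $\mathsf{area}$ of $P$ equals $\mathsf{area}(P') + r + s + (\text{interleaving contributions})$, where the $q^{r+s}$ accounts for a base amount and the two $q$-binomial coefficients ${s+r-1 \brack s}_q$ and ${s+h-1 \brack h}_q$ count the number of ways the $r$ steps (resp. $h$ steps) of one path can be interleaved with the $s$ steps of the other, each interleaving contributing a definite number of unit cells to the area. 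I would verify the exponents by carefully examining how, in the area-word picture via Corollary~\ref{cor:charactareawords}, the letters $1, \overline{1}$ attached to the first hook can be shuffled subject to the successor constraint, and that the generating function for those shuffles is precisely a Gaussian binomial.

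I expect the main obstacle to be the precise identification of the two $q$-binomial factors — in particular pinning down which lattice-path interleaving each one counts and proving that the area increment is exactly the number of inversions (or co-inversions) of that interleaving, so that summing $q$ over all interleavings gives ${s+r-1 \brack s}_q$ and ${s+h-1 \brack h}_q$ rather than some other Gaussian binomial or a shifted power of $q$. A clean way to handle this is to work entirely on the Dyck-path / area-word side: the first hook corresponds to an initial segment of the Dyck path of $\ptd(P)$, the removal operation corresponds to deleting that segment and relabelling rows, and the shuffles of the hook's letters become exactly the lattice paths inside a rectangle whose area generating function is the Gaussian binomial. I would double-check the boundary cases separately: when $r = n$ the second bounce is forced, the path is rigid, $P'$ degenerates, and one must see directly that $\widetilde{\mathsf{Nara}}_{m,n}^{(n,s)}$ is nonzero only for $s = m-1$ with value $(qt)^{m+n-1}{m+n-2 \brack m-1}_q$ (this is just the generating function of all polyominoes in $\mathrm{Polyo}_{m,n}$ whose bounce path is a single staircase hook, which are in bijection with the $\binom{m+n-2}{m-1}$ lattice paths in an $(m-1)\times(n-1)$ box, with $\mathsf{area}$ and $\mathsf{bounce}$ both equal to $m+n-1$ plus the inversion count); and when $m = 1$ there is no room for any $\overline{1}$, forcing $s = 0$ and $r = n$, which gives $\widetilde{\mathsf{Nara}}_{1,n}^{(r,0)} = 0$ for $r < n$.
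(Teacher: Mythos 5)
Your proposal follows essentially the same route as the paper: decompose $P$ at the third bounce by cutting out the smaller polyomino in $\mathrm{Polyo}_{m-s,n-r}^{(h,k)}$, observe that the bounce labels inside the cut-out are those of the small polyomino shifted by $1$ (giving $t^{m+n-1}$), and account for the area of the removed hook by $q^{r+s}$ times the two Gaussian binomials, which are exactly the area generating functions of the constrained sub-paths you describe. One small slip in your boundary-case discussion: when $r=n$ the bounce is \emph{exactly} $m+n-1$ (only the area is $m+n-1$ plus the inversion count of the free part of the lower path), which is why the initial condition is $(qt)^{m+n-1}{m+n-2 \brack m-1}_q$ with the Gaussian binomial in $q$ alone rather than in $qt$.
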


\begin{proof}
The argument in this proof is best understood by referring to Figure \ref{fig9}.

\begin{figure}[h]
\includegraphics[scale=0.5]{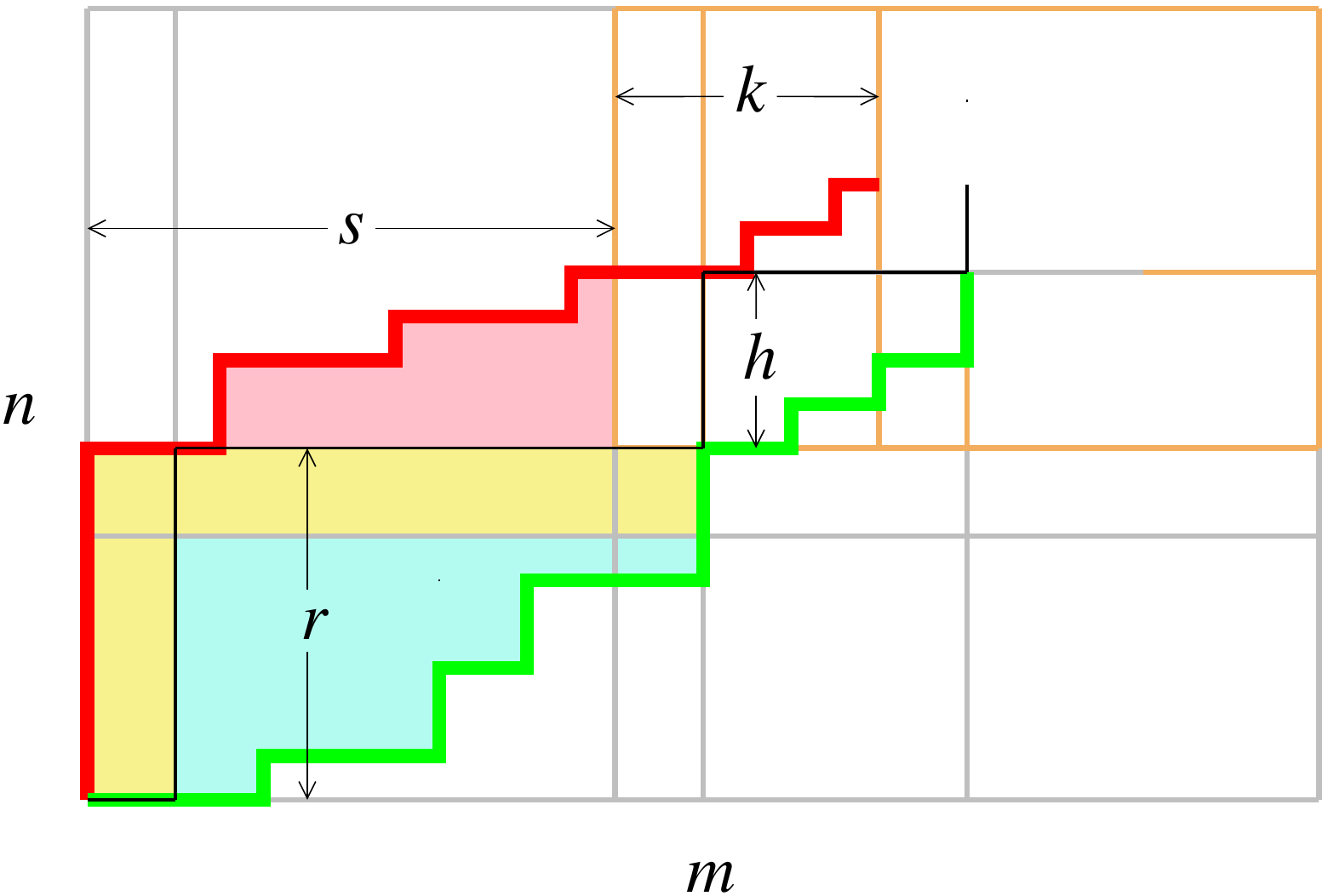}
\caption{\label{fig9}
The black path is the bounce path.}
\end{figure}

The figure shows a typical element of $\widetilde{\mathrm{Polyo}}_{m,n}^{(r,s)}$.
The orange grid cuts
out an element of $\widetilde{\mathrm{Polyo}}_{m-s,n-r}^{(h,k)}$:
its lower-left corner is placed at the beginning of the rightmost
step of the bounce path labelled by $\overline{1}$.

Observe that the labels of the bounce path in the orange grid are
the same as the labels of the corresponding small path all
increased by $1$. Hence, together with the $1$'s and the
$\overline{1}$'s of the bounce path outside of the orange grid, we
see that the bounce of the larger polyomino is $m+n-1$ more than
the bounce of the small polyomino in the orange grid. This shift
is taken care of by the factor $t^{m+n-1}$.

The area of the larger polyomino is equal to
the area of the small polyomino in the orange grid plus the yellow
area, which is taken care of by the factor $q^{r+s}$, the light blue
area, which is counted by the factor
${s+r-1\brack s}_q$, and the pink area, which is counted by the
factor ${s+h-1 \brack h}_q$.
This explains the recursion formula.
\end{proof}

Let us denote by $\mathrm{Polyo}_{n,m}^{(r,s)}$ the set of
parallelogram polyominoes in an $n\times m$ rectangle whose area
word has $r$ many $1$'s and $s$ many $\overline{1}$'s.
Define
$$
\mathsf{Nara}_{n,m}^{(r,s)}(q,t):=\sum_{P\in
{\mathrm{Polyo}}_{n,m}^{(r,s)}}t^{\mathsf{area}(P)}q^{\mathsf{dinv}(P)},
$$
so that ${\mathsf{Nara}}_{n,m}(q,t)$ is the sum over all $r$
and $s$ of ${\mathsf{Nara}}_{n,m}^{(r,s)}(q,t)$.

These polynomials satisfy the same recursion satisfied by the
$\widetilde{\mathsf{Nara}}_{m,n}^{(r,s)}(q,t)$'s:

\begin{Thm} \label{thm:recursion}
For all $m,n,r$ and $s$ with $1\leq r\leq n$ and
$0\leq s\leq m-1$, we have the recursion
$$
\mathsf{Nara}_{n,m}^{(r,s)}(q,t)=t^{m+n-1}q^{r}\sum_{h=1}^{n-r}\sum_{k=0}^{m-s-1}
q^s {s+r-1 \brack s}_q {s+h-1 \brack h}_q
\mathsf{Nara}_{n-r,m-s}^{(h,k)}(q,t),
$$
with initial conditions
$$
\mathsf{Nara}_{n,m}^{(n,s)}(q,t)=\left\{\begin{array}{cc}
  (qt)^{m+n-1}
{m+n-2 \brack m-1}_q & \text{if $s=m-1$} \\
0 & \text{if $s<m-1$}, \\
\end{array}\right.
$$
and $\,\,\, \mathsf{Nara}_{n,1}^{(r,0)}(q,t)=0\,\,\, $ for $\,\,\,
r<n$.
\end{Thm}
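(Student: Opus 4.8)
The plan is to prove the recursion, together with the two boundary identities, by a direct combinatorial decomposition of area words, using only the characterization in Corollary~\ref{cor:charactareawords}. Let $P\in\mathrm{Polyo}_{n,m}^{(r,s)}$ have area word $w$. By Corollary~\ref{cor:charactareawords}, $w$ begins with $\overline{0}\,1$; assume $r<n$ (the case $r=n$ is a base case, treated below), so $w$ contains a letter $\geq 2$. The successor condition of that corollary forces the first such letter to be a $2$, and more generally forces every maximal run of letters $\geq 2$ in $w$ to begin with a $2$ and to be immediately preceded by an $\overline{1}$. Let $\ell$ be the subword of $w$ consisting of the $1$'s and $\overline{1}$'s (so $\ell$ has $r$ unbarred and $s$ barred letters and starts with $1$), and let $H$ be the subword of letters $\geq 2$. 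Lowering each letter of $H$ by one level ($j\mapsto j-1$, $\overline{j}\mapsto\overline{j-1}$) and prepending $\overline{0}$ gives, again by Corollary~\ref{cor:charactareawords}, the area word of a polyomino $P'\in\mathrm{Polyo}_{n-r,m-s}^{(h,k)}$, where $h$ (resp. $k$) is the number of $2$'s (resp. $\overline{2}$'s) of $w$; one checks $1\leq h\leq n-r$ and $0\leq k\leq m-s-1$. Conversely $P$ is recovered from $P'$ together with $\ell$ and the data $\pi$ describing how $H$ is cut into consecutive chunks — each necessarily starting with a $2$ — and into which of the $s$ gaps ``immediately following an $\overline{1}$ of $\ell$'' these chunks are inserted (distinct gaps, in order). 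Since $\ell$ and $\pi$ are otherwise free, this is a bijection $\mathrm{Polyo}_{n,m}^{(r,s)}\cong\bigsqcup_{h,k}\bigl(\mathrm{Polyo}_{n-r,m-s}^{(h,k)}\times\{\ell\}\times\{\pi\}\bigr)$.

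Next comes the weight bookkeeping. The $\overline{0}$ contributes $0$ to the area, the $r+s$ letters of $\ell$ contribute $r+s$, and each of the $m+n-1-r-s$ letters of $H$ contributes exactly $1$ more than its lowered copy contributes in $P'$; hence $\mathsf{area}(P)=\mathsf{area}(P')+(m+n-1)$, which is the factor $t^{m+n-1}$. For $\mathsf{dinv}$: the only successor pairs of $w$ involving a deleted letter are the $(\overline{0},1)$, the $(1,\overline{1})$ and the $(\overline{1},2)$ pairs, while passing to $P'$ creates exactly the $h$ new pairs $(\overline{0},1)$ arising from the $2$'s of $w$; all other successor pairs of $w$, namely those between two letters $\geq 2$, correspond bijectively to the remaining successor pairs of $P'$. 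So
$$\mathsf{dinv}(P)=\mathsf{dinv}(P')+(r-h)+\mathrm{inv}(\ell)+B,$$
where $\mathrm{inv}(\ell)$ counts the pairs $(1,\overline{1})$ of $\ell$ with the $1$ first and $B$ counts the pairs $(\overline{1},2)$ of $w$. Crucially, $\mathrm{inv}(\ell)$ depends only on $\ell$, $B$ depends only on $\pi$, and the two range independently.

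It then remains to evaluate the two Gaussian binomials. Summing over $\ell$: the forced leading $1$ precedes all $s$ barred letters, and for the remaining shuffle of $r-1$ unbarred with $s$ barred letters one has the standard identity $\sum q^{\mathrm{inv}}={s+r-1\brack s}_q$, so $\sum_\ell q^{\mathrm{inv}(\ell)}=q^{s}{s+r-1\brack s}_q$. Summing over $\pi$: a chunk placed in the gap after the $p$-th $\overline{1}$ puts exactly $p$ barred letters before each of its $2$'s, so sending the $i$-th $2$ of $w$ to the index $p\in\{1,\dots,s\}$ of the gap holding its chunk sets up a bijection between $\pi$ and the weakly increasing maps $v\colon\{1,\dots,h\}\to\{1,\dots,s\}$, under which $B=\sum_i v_i$; hence $\sum_\pi q^{B}=q^{h}{s+h-1\brack h}_q$. (It matters here that there are $s$ admissible gaps, not $s+1$: a high chunk appended at the very end of $w$ would require $\ell$ to end in $\overline{1}$, which is already the ``gap after the $s$-th $\overline{1}$''.) Multiplying $t^{m+n-1}$, $q^{r-h}$, $q^{s}{s+r-1\brack s}_q$, $q^{h}{s+h-1\brack h}_q$ and $t^{\mathsf{area}(P')}q^{\mathsf{dinv}(P')}$ and summing over $P'$, $h$, $k$ gives exactly the stated recursion (using $q^{r+s}=q^{r}\cdot q^{s}$). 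For the initial conditions: if $r=n$ then $w$ has no letter $\geq 2$ (a $2$ is unbarred and all $n$ unbarred letters equal $1$), which forces $s=m-1$; the same $\mathrm{inv}$-count over $\ell$ then gives $(qt)^{m+n-1}{m+n-2\brack m-1}_q$, and $s<m-1$ is impossible; likewise $m=1$ forces $s=0$ and hence $r=n$.

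The step I expect to be the real obstacle is the first one: proving carefully that the successor condition of Corollary~\ref{cor:charactareawords} is \emph{equivalent} to the description ``$\ell$ arbitrary, $H$ arbitrary, chunks of $H$ glued into distinct $\overline{1}$-gaps of $\ell$'', so that the displayed correspondence really is a bijection. Once that is in place, the area computation, the formula for $\mathsf{dinv}$, and the two $q$-binomial evaluations are routine.
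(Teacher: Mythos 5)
Your proof is correct and takes essentially the same route as the paper's: both pass to the smaller polyomino by deleting the $1$'s and $\overline{1}$'s from the area word and lowering the remaining letters by one level, and both account for the change in $\mathsf{area}$ and $\mathsf{dinv}$ with the same factors $t^{m+n-1}$, $q^r$, $q^s{s+r-1\brack s}_q$ and $q^h\cdot q^{-h}{s+h-1\brack h}_q$. The only difference is that you make explicit the fibre data $(\ell,\pi)$ and the bijectivity argument that the paper leaves implicit.
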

\begin{proof}
Given an element of $\mathrm{Polyo}_{n,m}^{(r,s)}$ with $h$ many
$2$'s and $k$ many $\overline{2}$'s, we construct an element of
$\mathrm{Polyo}_{n-r,m-s}^{(h,k)}$ by subtracting $1$ from all the
letters in the area word, then removing all the resulting $0$'s
and $\overline{0}$'s and replacing the only $\overline{-1}$ (which
comes from the only $\overline{0}$) by $\overline{0}$.

For example, if we start with the word
$\textcolor{blue}{\overline{0}}\textcolor{green}{11}\textcolor{red}{\overline{1}}
22\textcolor{red}{\overline{1}}2\overline{2}\overline{2}\textcolor{green}{1}\textcolor{red}{\overline{1}\overline{1}}$,
which is an element of $\mathrm{Polyo}_{6,7}^{(3,4)}$ with $3$
many $2$'s and $2$ many $\overline{2}$'s, then we first get
$\textcolor{blue}{\overline{-1}}\textcolor{green}{00}\textcolor{red}{\overline{0}}11\textcolor{red}{\overline{0}}
1\overline{1}\overline{1}\textcolor{green}{0}\textcolor{red}{\overline{0}\overline{0}}$,
and hence we finally get
$\textcolor{blue}{\overline{0}}111\overline{1}\overline{1}$, which
is an element of
$\mathrm{Polyo}_{6-3,7-4}^{(3,2)}=\mathrm{Polyo}_{3,3}^{(3,2)}$.

Now the area of this new element is clearly $m+n-1$ less than the
area of the original polyomino, since we subtracted $1$ from all
the letters of the area word different from $\overline{0}$. This is
taken care of by the factor $t^{m+n-1}$.

The dinv of the original polyomino is equal to the dinv of this
smaller polyomino, plus the dinv coming from the original
$\overline{0}$ and the $1$'s, which is taken care of by the factor
$q^r$, the dinv coming from the $1$'s and the $\overline{1}$'s,
which is counted by the
factor $q^s {s+r-1 \brack s}_q$
(the $1$'s and the $\overline{1}$'s form a word
which always starts with $1$), and the dinv coming from the
$\overline{1}$'s and the $2$'s, which is counted by the factor
${s+h-1 \brack h}_q$
(as before, the $\overline{1}$'s and the $2$'s
form a word which always starts with $\overline{1}$, but the dinv
coming from this first letter is already counted by the
$\overline{0}$ that we insert in the new area word!).

This explains the recursion.
\end{proof}
As already mentioned, these recursions give immediately
$\mathsf{Nara}_{n,m}^{(r,s)}(q,t)=\widetilde{\mathsf{Nara}}_{m,n}^{(r,s)}(q,t)$,
and hence another proof of the identity
$\mathsf{Nara}_{m,n}(q,t)=\widetilde{\mathsf{Nara}}_{n,m}(q,t)$.

\section{Symmetric functions interpretation}
\label{sec:six}

In this section we will use some tools from the theory of
Macdonald polynomials. For a quick survey of what we need (and
more), we refer to the book \cite{bergeron}, in particular
Chapters 3 and 9.
In what follows we will recall only some basic facts, mostly to fix the
notation.

Let $\Lambda=\bigoplus_{n\geq 0}\Lambda^n$ be the space of
symmetric functions with coefficients in $\mathbb{C}(q,t)$, where
$q$ and $t$ are variables, with its natural decomposition in
components of homogeneous degree.
Recall the fundamental bases of symmetric functions:
\textit{elementary} $\{e_{\mu}\}_{\mu}$, \textit{homogeneous}
$\{h_{\mu}\}_{\mu}$, \textit{power} $\{p_{\mu}\}_{\mu}$,
\textit{monomial} $\{m_{\mu}\}_{\mu}$ and \textit{Schur}
$\{s_{\mu}\}_{\mu}$, where the indices $\mu$ are partitions.

A scalar product is defined on $\Lambda$ by declaring the Schur
basis to be orthonormal:
$$
\langle s_{\lambda},s_{\mu} \rangle=\chi(\lambda=\mu),
$$
where $\chi$ is the indicator function, which is $1$ when its
argument is true, and $0$ otherwise. Another fundamental basis of
$\Lambda$ is $\{\widetilde{H}_{\mu}\}_{\mu}$, the \textit{modified
Macdonald polynomial} basis.

The fundamental ingredient of the theory is the nabla operator
$\nabla$ acting on $\Lambda$. This is an homogeneous invertible
operator introduced by Bergeron and Garsia in the study of the
diagonal harmonics $DH_n$ of $\mathfrak{S}_n$. In fact, it
turns out that $\nabla e_n$ gives precisely the bigraded Frobenius
characteristic of $DH_n$.

The so-called \textit{shuffle conjecture} predicts a combinatorial
interpretation of $\nabla e_n$ in terms of parking functions.
Special cases of this conjecture have been proven by several
authors. In particular, Haglund \cite{haglund} proved the
combinatorial interpretation of $\langle \nabla e_n,h_j h_{n-j}
\rangle$ for $1\leq j\leq n$ predicted by the shuffle conjecture.

Surprisingly, this same polynomial provides the symmetric
functions interpretation of our $q,t$-Narayana numbers.
More precisely, we have the following theorem, which is the main
result of this paper.

\begin{Thm} \label{thm:main}
For $m,n\geq 1$ we have
$$
\mathsf{Nara}_{m,n}(q,t)=(qt)^{m+n-1}\cdot \langle \nabla
e_{m+n-2},h_{m-1} h_{n-1} \rangle.
$$
\end{Thm}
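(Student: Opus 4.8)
The plan is to show that both sides of the claimed identity satisfy the same recursion with the same initial conditions, so that they must be equal. On the polyomino side we already have, by Theorem~\ref{thm:recursion} (combined with Theorem~\ref{thm:ADinBA}, or equivalently the recursion for $\widetilde{\mathsf{Nara}}$), a refined recursion for $\mathsf{Nara}_{m,n}^{(r,s)}(q,t)$ indexed by the number $r$ of $1$'s and $s$ of $\overline 1$'s in the area word. So the first step is to extract the corresponding refined statement on the symmetric-function side: set $F_{m,n}^{(r,s)}(q,t)$ equal to the contribution to $(qt)^{m+n-1}\langle \nabla e_{m+n-2}, h_{m-1}h_{n-1}\rangle$ coming from parking functions (in Haglund's combinatorial model for $\langle \nabla e_N, h_j h_{N-j}\rangle$) whose combinatorial data encodes the same pair $(r,s)$ — i.e. the first ``bounce block'' has length $r$ and the second has length $s$, under the dictionary between parking-function statistics and polyomino statistics. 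The key claim to isolate and prove is then: $F_{m,n}^{(r,s)}(q,t)$ satisfies exactly the recursion and initial conditions of Theorem~\ref{thm:recursion}.

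The heart of the argument is therefore a recursion for Haglund's polynomial $\langle \nabla e_N, h_j h_{N-j}\rangle$, refined by the size of the leftmost bounce block of the associated parking functions. I would derive this by a ``peeling off the first bounce step'' argument in the parking-function model: a parking function of order $N$ with reading word content split as $j, N-j$, whose main diagonal / bounce decomposition begins with blocks of sizes $r$ and $s$, restricts — after removing the cars lying in those first two blocks and renormalizing diagonals — to a parking function of order $N-r-s+\text{(boundary correction)}$ with a shifted content split. The $q$-binomial factors $\binom{s+r-1}{s}_q$ and $\binom{s+h-1}{h}_q$ arise exactly as the $\mathsf{dinv}$ (or $\mathsf{area}$, depending on which side of Haglund's two equivalent statistics one uses) generated by interleaving the cars of consecutive blocks, which is a standard $q$-Vandermonde / Cauchy-type count; the powers of $t$ and the prefactors $q^r$, $q^s$ track the uniform shift of the $\mathsf{area}$ (or $\mathsf{bounce}$) statistic when a whole diagonal layer is stripped away. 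The initial conditions correspond to the case where the first bounce block already exhausts one of the two colour classes, where Haglund's polynomial degenerates to a single $q$-binomial $\binom{m+n-2}{m-1}_q$ times $(qt)^{m+n-1}$ — this should be recognizable directly, e.g. from $\langle \nabla e_{N}, h_{N-1}h_1\rangle$ having a known closed form, or from the hook/one-row specialization of $\nabla e_N$.

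The main obstacle, and the step deserving the most care, is setting up the precise dictionary between the combinatorial objects indexing Haglund's polynomial and the parameters $(r,s)$ so that the decomposition above is actually a bijection respecting both relevant statistics simultaneously, and then verifying that the $q$-binomial weights come out with exactly the right arguments (the asymmetry between the two factors — $\binom{s+r-1}{s}_q$ versus $\binom{s+h-1}{h}_q$ — and the placement of the extra $q^s$ must match Theorem~\ref{thm:recursion} on the nose, not merely up to a symmetry). Once the refined recursion for $F_{m,n}^{(r,s)}$ is established, comparison with Theorem~\ref{thm:recursion} is immediate: both families satisfy the same recursion with the same base cases, hence $F_{m,n}^{(r,s)}(q,t)=\mathsf{Nara}_{n,m}^{(r,s)}(q,t)$ for all $r,s$, and summing over $r$ and $s$ gives $(qt)^{m+n-1}\langle \nabla e_{m+n-2}, h_{m-1}h_{n-1}\rangle = \mathsf{Nara}_{n,m}(q,t)$; finally invoking Theorem~\ref{thm:ADinBA} (or directly the $m\leftrightarrow n$ symmetry of $\langle \nabla e_{m+n-2},h_{m-1}h_{n-1}\rangle$, which is manifest from the symmetry of the Hall pairing in the two $h$-factors) converts this into the stated form $\mathsf{Nara}_{m,n}(q,t)=(qt)^{m+n-1}\langle \nabla e_{m+n-2}, h_{m-1}h_{n-1}\rangle$, and Theorem~\ref{thm:symmetries} then follows since the right-hand side is visibly symmetric in $q,t$ (as $\nabla e_N$ is Schur-positive and $\nabla$ commutes with the $q\leftrightarrow t$ involution) and in $m,n$.
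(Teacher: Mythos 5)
Your proposal follows essentially the same route as the paper: invoke Haglund's combinatorial formula for $\langle \nabla e_{m+n-2},h_{m-1}h_{n-1}\rangle$ in terms of parking functions whose reading word is a shuffle, refine it by a pair $(r,s)$ recording the split of the zero diagonal between the two colour classes, and verify by peeling off that diagonal (with $q$-binomial factors from interleaving the two kinds of cars) that the refined polynomial satisfies exactly the recursion and initial conditions of Theorem~\ref{thm:recursion}. The only point to tighten is the dictionary you flag yourself: in the paper $(r,s)$ is $\bigl(|D_0(PF)\cap A|,|D_0(PF)\cap B|\bigr)$, matching $\mathsf{Nara}_{m,n}^{(s+1,r)}$, rather than the lengths of the first two bounce blocks, but this is exactly the ``step deserving the most care'' you identify.
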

Before proving this theorem, we give here an immediate corollary.
\begin{Thm} \label{thm:symmetries}
For all $m\geq 1$ and $n\geq 1$, we have
$$\mathsf{Nara}_{m,n}(q,t)=\mathsf{Nara}_{m,n}(t,q)$$
and
$$\mathsf{Nara}_{m,n}(q,t)=\mathsf{Nara}_{n,m}(q,t).$$
Moreover, we have
$$\mathsf{Nara}_{m,n}(q,t)=\widetilde{\mathsf{Nara}}_{m,n}(q,t).$$
\end{Thm}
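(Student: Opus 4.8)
The plan is to derive Theorem \ref{thm:symmetries} entirely from Theorem \ref{thm:main} and Theorem \ref{thm:ADinBA}, so that no further combinatorics on parallelogram polyominoes is needed: all three identities become formal consequences of known facts about the operator $\nabla$ and the commutativity of multiplication of symmetric functions.

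First I would dispose of the symmetry in $m$ and $n$, which is essentially immediate. By Theorem \ref{thm:main},
$$\mathsf{Nara}_{m,n}(q,t)=(qt)^{m+n-1}\cdot\langle\nabla e_{m+n-2},h_{m-1}h_{n-1}\rangle,$$
and since $h_{m-1}h_{n-1}=h_{n-1}h_{m-1}$ in $\Lambda$ and $m+n-2$ is symmetric in $m$ and $n$, the right-hand side is invariant under exchanging $m$ and $n$. This yields $\mathsf{Nara}_{m,n}(q,t)=\mathsf{Nara}_{n,m}(q,t)$ directly.

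Next I would treat the symmetry in $q$ and $t$. The key input is the standard fact that $\nabla e_N$ has Schur expansion $\nabla e_N=\sum_{\lambda\vdash N}c_\lambda(q,t)\,s_\lambda$ with coefficients satisfying $c_\lambda(q,t)=c_\lambda(t,q)$; this follows from Haiman's identification of $\nabla e_N$ with the bigraded Frobenius characteristic of the diagonal harmonics $DH_N$ (see \cite{haiman}) together with the module automorphism of $DH_N$ that exchanges the two sets of variables and hence swaps the two gradings. Granting this, and using that $\langle s_\lambda,h_{m-1}h_{n-1}\rangle=K_{\lambda,(m-1,n-1)}\in\mathbb{Z}$ is a Kostka number independent of $q$ and $t$, I get
$$\langle\nabla e_{m+n-2},h_{m-1}h_{n-1}\rangle=\sum_{\lambda}K_{\lambda,(m-1,n-1)}\,c_\lambda(q,t),$$
which is symmetric in $q$ and $t$. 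Multiplying by the symmetric prefactor $(qt)^{m+n-1}$ gives $\mathsf{Nara}_{m,n}(q,t)=\mathsf{Nara}_{m,n}(t,q)$.

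Finally, for the identity $\mathsf{Nara}_{m,n}(q,t)=\widetilde{\mathsf{Nara}}_{m,n}(q,t)$ I would simply combine the above with Theorem \ref{thm:ADinBA}: that theorem gives $\widetilde{\mathsf{Nara}}_{m,n}(q,t)=\mathsf{Nara}_{n,m}(q,t)$, and the $m\leftrightarrow n$ symmetry just established rewrites the right-hand side as $\mathsf{Nara}_{m,n}(q,t)$. The main (indeed only) delicate point is that the $q,t$-symmetry of the Schur coefficients of $\nabla e_N$ is not elementary: it rests on Haiman's geometric proof of the $n!$-conjecture and the resulting representation-theoretic description of $DH_N$. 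I would be careful to flag this as an external input rather than something reproven here; every other step is a formal manipulation in $\Lambda$.
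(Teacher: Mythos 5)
Your proposal is correct and follows essentially the same route as the paper: both deduce the $m\leftrightarrow n$ symmetry directly from the formula in Theorem \ref{thm:main}, the $q,t$-symmetry from the $q,t$-symmetry of $\nabla e_{m+n-2}$ paired against the $(q,t)$-independent product $h_{m-1}h_{n-1}$, and the final identity by combining these with Theorem \ref{thm:ADinBA}. The only difference is in how the $q,t$-symmetry of $\nabla e_{m+n-2}$ is justified: you invoke Haiman's geometric theorem identifying $\nabla e_N$ with the bigraded Frobenius characteristic of $DH_N$ together with the variable-swapping automorphism, whereas the paper cites the lighter, purely algebraic fact (via the duality $\widetilde{H}_{\mu}(X;q,t)=\widetilde{H}_{\mu'}(X;t,q)$, \cite[Equation (9.8)]{bergeron}) that $\nabla$ applied to any Schur function --- in particular to $e_N=s_{1^N}$ --- is symmetric in $q$ and $t$, so Haiman's theorem is not actually needed for this step.
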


\begin{proof}[Proof of the Theorem \ref{thm:symmetries}]
The symmetry in $q$ and $t$ comes from a general property of the
nabla operator, which is an immediate consequence of the
well-known identity \cite[Equation (9.8)]{bergeron}: nabla applied
to any Schur function is symmetric in $q$ and $t$.

The second equation, symmetry in $m$ and $n$, is obvious from the formula in Theorem \ref{thm:main}.
Finally, the fact that $\mathsf{Nara}_{m,n}(q,t)=
\widetilde{\mathsf{Nara}}_{m,n}(q,t)$ is a direct consequence of
the symmetries and of Theorem \ref{thm:ADinBA}.
\end{proof}

\subsection{Proof of Theorem \ref{thm:main}}
\label{sec:seven}

\newcommand{\onepf}[2]{
\scriptsize{
\setlength{\arraycolsep}{2pt}
\begin{array}{|c|} \hline #1 \\ #2\\ \hline \end{array}
}
}
\newcommand{\pfsize}[1]{\small{#1}}
\newcommand{\fourpf}[8]{
    \pfsize{
    \begin{array}{|c|c|c|c|} \hline #1 & #2 & #3 & #4 \\ #5 & #6 & #7 & #8 \\ \hline \end{array}
    }
}

In order to prove Theorem \ref{thm:main}, we need to make use of Haglund's
combinatorial interpretation of $\langle \nabla e_{m+n-2},h_{m-1} h_{n-1} \rangle$.
To do this we first require some definitions.

For us a \textit{Dyck path of order $k$} will be given by an
\textit{area word} which is a sequence of non-negative integers
$b_1b_2\cdots b_k$ such that $b_1=0$, and $b_{i+1}\leq b_i+1$ for
all $1\leq i <k$.
A {\it{domino}} is a pair of values $(a,b)$ written as the first above the second $\onepf{a}{b}$.

A \textit{parking function} $PF$ of size $k$ is a sequence of $k$
dominoes $\fourpf{a_1}{a_2}{\cdots}{a_k}{b_1}{b_2}{\cdots}{b_k}$
such that $b_1b_2\cdots b_k$ is the area word of a Dyck path of
order $k$, and the $a_i$'s are a permutation of the integers
$\{1,\ldots,k\}$ with the property $a_i<a_{i+1}$ if $b_i<b_{i+1}$
(and hence $b_i=b_{i+1}-1$).

\begin{Ex}
$PF=
\pfsize{\begin{array}{|c|c|c|c|c|c|c|c|c|c|c|} \hline
  5 & 11 & 1 & 9 & 6 & 8 & 3 & 4 & 7 & 10 & 2\\
  0 & 1 & 1 & 2 & 0 & 1 & 0 & 1 & 2 & 3 & 3\\
\hline \end{array}}$ is a parking function of size $11$.
\end{Ex}

\begin{Rem}
Parking functions are often represented by a diagram like the one in Figure \ref{fig10}.
In this diagram the red path represents the underlying Dyck path,
where the number of the squares between the vertical steps of the
Dyck path and the (green) diagonal are given by the lower numbers
in the dominoes. The numbers that label the vertical steps
of the Dyck path in the diagram are simply the upper numbers in
the dominoes.
\begin{figure}[h]
\includegraphics[scale=0.5]{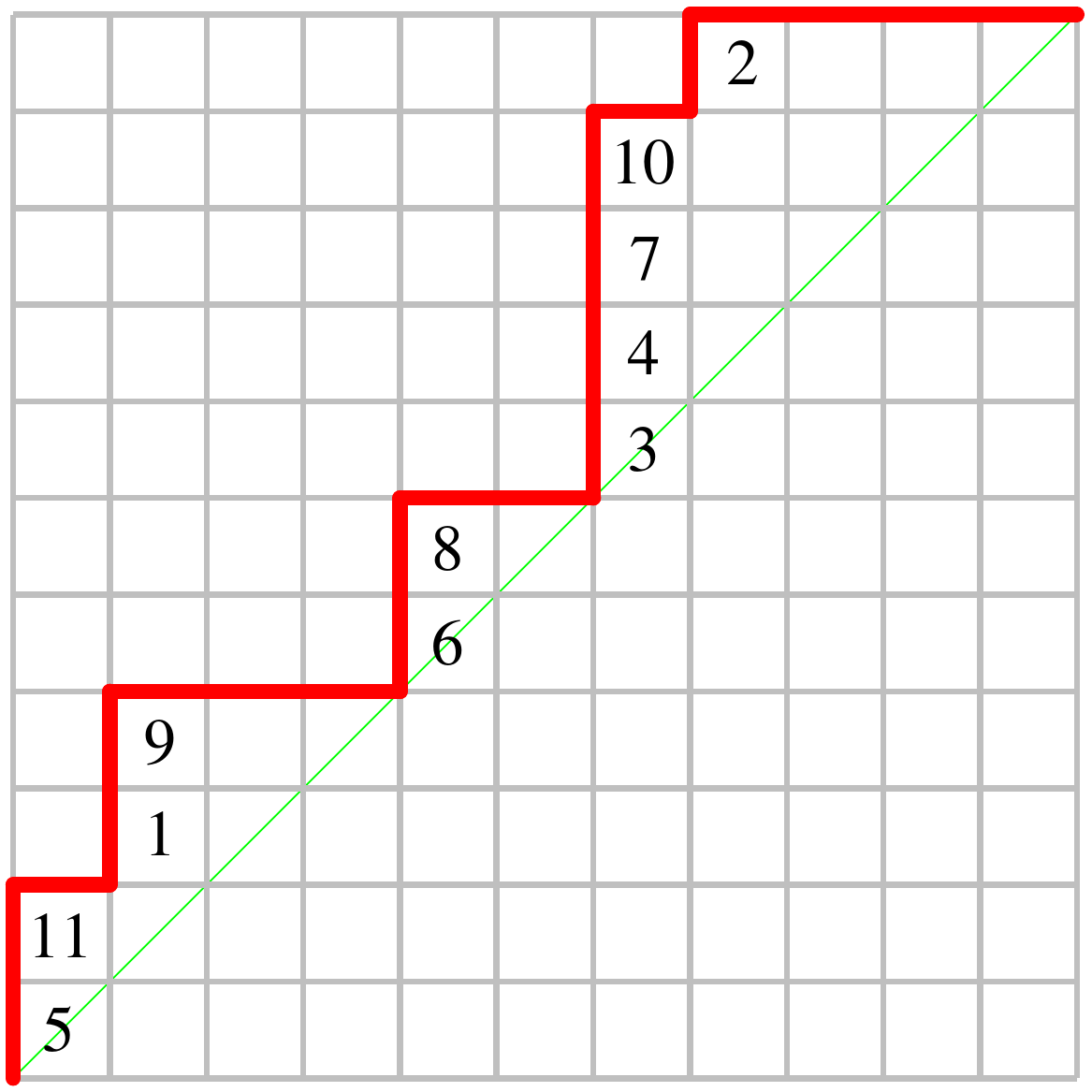}
\caption[ciccia]{\label{fig10}
The parking function
$PF= \pfsize{\begin{array}{|c|c|c|c|c|c|c|c|c|c|c|} \hline
  5 & 11 & 1 & 9 & 6 & 8 & 3 & 4 & 7 & 10 & 2\\
  0 & 1 & 1 & 2 & 0 & 1 & 0 & 1 & 2 & 3 & 3\\
  \hline \end{array}}$.}
\end{figure}
\end{Rem}

Given a parking function, we can reorder its dominoes by comparing
first the bottom numbers, from the biggest to the smallest, and
then, we place the dominoes with the same bottom number in order
as we read them from right to left in the parking function.

The \textit{reading word} $\sigma (PF)$ associated to a parking
function $PF$ is the permutation that we obtain by reading the
upper entries of this reordered sequence of dominoes.

\begin{Ex}\label{hobo}
If
$PF= \pfsize{\begin{array}{|c|c|c|c|c|c|c|c|c|c|c|} \hline
  5 & 11 & 1 & 9 & 6 & 8 & 3 & 4 & 7 & 10 & 2\\
  0 & 1 & 1 & 2 & 0 & 1 & 0 & 1 & 2 & 3 & 3\\
  \hline \end{array}}$ then we reorder the dominoes as
$$\pfsize{\begin{array}{|c|c|c|c|c|c|c|c|c|c|c|} \hline
  2 & 10 & 7 & 9 & 4 & 8 & 1 & 11 & 3 & 6 & 5\\
  3 & 3 & 2 & 2 & 1 & 1 & 1 & 1 & 0 & 0 & 0\\
  \hline \end{array}},$$ and the corresponding reading word is
$\sigma(PF)=2\,\, 10\,\, 7\,\, 9\,\, 4\,\, 8\,\, 1\,\, 11\,\, 3\,\, 6\,\, 5.$
\end{Ex}

Given a parking function $PF=\fourpf{a_1}{a_2}{\cdots}{a_k}{b_1}{b_2}{\cdots}{b_k}$,
we define its \textit{area} to be $\mathsf{area}(PF) =b_1+\ldots+b_k$,
and its \textit{dinv} $\mathsf{dinv}(PF)$ as the number of pairs
$(i,j)$ with $1\leq i<j\leq k $ 
such that either $b_i=b_j$ and $a_i<a_j$, or $b_i=b_j+1$ and $a_i>a_j$.
For example the area of the parking function of Example \ref{hobo}
is $14$, while its dinv is $8$.

Given two disjoint sequences of numbers $A$ and $B$, we denote by
$A \shuffle B$ the set of \textit{shuffles} of $A$ and $B$, i.e.
the sequences consisting of the numbers from $A\cup B$ in which
all the elements of $A$ and $B$ appear in their original
order, so that $|A \shuffle B|= {|A|+|B| \choose |A|}$.

For any $a$ and $b$ in $\mathbb{N}$, we call $\mathrm{Park}_{a,b}$
the set of parking functions $PF$ of size $a+b$ such that
$\sigma(PF)\in (1,2,\dots,a)\shuffle (a+1,a+2,\dots,a+b)$.
Finally, we set
$$
\mathsf{Para}_{a,b}(q,t):=\sum_{PF\in
\mathrm{Park}_{a,b}}t^{\mathsf{area}(PF)}q^{\mathsf{dinv}(PF)}.
$$
We may state now the result of Haglund (see \cite{haglund} for a
proof, and \cite{haglundbook} for the necessary background).

\begin{Thm}[Haglund]\label{Hthm}
For all $m\geq 1$ and $n\geq 1$, we have
$$ \langle \nabla e_{m+n-2},h_{m-1} h_{n-1}
    \rangle=\mathsf{Para}_{n-1,m-1}(q,t).$$
\end{Thm}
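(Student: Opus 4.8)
The statement is a theorem of Haglund, and the honest plan is to explain how it reduces to one deep symmetric-function input together with a short combinatorial lemma, rather than to rederive the Macdonald-polynomial machinery from scratch. Set $N=m+n-2$. The bridge between the two sides is the expansion of symmetric functions in \emph{Gessel's fundamental quasisymmetric functions} $F_{\alpha}$, and the compatibility of this basis with both $\mathsf{dinv}$, $\mathsf{area}$ and the reading word $\sigma(PF)$. I would organize the argument around two ingredients: (A) a fundamental-quasisymmetric description of $\nabla e_N$ valid for the complete-homogeneous pairing, and (B) an elementary pairing lemma that turns the shuffle condition defining $\mathrm{Park}_{n-1,m-1}$ into the scalar product with $h_{m-1}h_{n-1}$.

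For ingredient (B), recall that for a permutation $\sigma\in\mathfrak{S}_N$ one forms $F_{\mathrm{iDes}(\sigma)}$, the fundamental quasisymmetric function indexed by the descent set of $\sigma^{-1}$. The key fact, standard in this theory, is that $\langle F_{\mathrm{iDes}(\sigma)},h_{\mu}\rangle\in\{0,1\}$, and it equals $1$ precisely when $\sigma$ is a shuffle of increasing runs on the consecutive blocks $\{1,\dots,\mu_1\},\{\mu_1+1,\dots,\mu_1+\mu_2\},\dots$ For $\mu=(n-1,m-1)$ this says exactly that $\sigma\in(1,\dots,n-1)\shuffle(n,\dots,n+m-2)$, i.e.\ that $\sigma$ lies in the shuffle set defining $\mathrm{Park}_{n-1,m-1}$. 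Thus, once $\nabla e_N$ is expressed as $\sum_{PF}q^{\mathsf{dinv}(PF)}t^{\mathsf{area}(PF)}F_{\mathrm{iDes}(\sigma(PF))}$, pairing with $h_{m-1}h_{n-1}$ kills every parking function except those in $\mathrm{Park}_{n-1,m-1}$ and leaves their weight $q^{\mathsf{dinv}}t^{\mathsf{area}}$ untouched, giving exactly $\mathsf{Para}_{n-1,m-1}(q,t)$.

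Ingredient (A) is the hard part and is the true content of Haglund's theorem. Since the full shuffle conjecture was not available, the plan is to prove the identity only after testing against $h_{m-1}h_{n-1}$, i.e.\ to show that the two-block specialization $\langle\nabla e_N,h_jh_{N-j}\rangle$ agrees with the restricted parking-function sum. I would do this by establishing a recursion for $\langle\nabla e_N,h_jh_{N-j}\rangle$ from the eigenoperator calculus of modified Macdonald polynomials $\widetilde{H}_{\mu}$ (Pieri-type rules for multiplication by $h_1$, the explicit action of $\nabla$ on the $\widetilde{H}_{\mu}$ basis, and the Garsia--Haiman--Tesler summation formula for $\nabla e_N$), and then matching it termwise with a recursion for $\mathsf{Para}_{n-1,m-1}(q,t)$ obtained by peeling off the largest parts of the area word of the underlying Dyck path.

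The main obstacle is precisely ingredient (A): proving the symmetric-function recursion and exhibiting the sign-reversing involution (or direct bijection) that forces it to coincide with the combinatorial recursion, all without invoking the (then-open) shuffle theorem. The pairing lemma (B) is routine from the duality between the fundamental quasisymmetric basis and the complete-homogeneous functions, and the reduction itself is formal; it is the Macdonald-polynomial identity for $\langle\nabla e_N,h_jh_{N-j}\rangle$ that carries all the weight, which is why the paper quotes \cite{haglund} for the proof rather than reproducing it.
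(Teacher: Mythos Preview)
The paper does not give its own proof of this theorem: it is stated as a result of Haglund and the reader is referred to \cite{haglund} for the proof and to \cite{haglundbook} for background. Your proposal correctly recognizes this, and your sketch of Haglund's argument---expanding $\nabla e_N$ in fundamental quasisymmetric functions indexed by the inverse descent set of the reading word, then using the pairing $\langle F_{\mathrm{iDes}(\sigma)},h_\mu\rangle=\chi(\sigma\text{ is a shuffle of the $\mu$-blocks})$ to isolate $\mathrm{Park}_{n-1,m-1}$, with the symmetric-function side established by a Macdonald-polynomial recursion---is an accurate high-level account of how the cited proof proceeds. Since the paper's treatment is simply to quote the result, there is nothing further to compare.
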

This theorem reduces the problem of proving Theorem \ref{thm:main} to proving the following:
\begin{equation}
\label{newequiv}
\mathsf{Nara}_{m,n}(q,t)=(qt)^{m+n-1}
\mathsf{Para}_{n-1,m-1}(q,t).
\end{equation}
In order to show the validity of this equation we do as follows.
For $0\leq r<n$, $0\leq s<m$ with $r+s\geq 1$, let $\mathrm{Park}_{n-1,m-1}^{(r,s)}$ be the set of parking
functions $PF$ of size $m+n-2$ such that
$$
\sigma(PF)\in A\shuffle B, \quad |D_0(PF)\cap A|=r,\quad
\text{and}\,\, |D_0(PF)\cap B|=s,
$$
where $A=(1,2,\dots,n-1)$, $B=(n,n+1,\dots,m+n-2)$, and $D_0(PF)$
is the set of upper numbers of dominoes of $PF$ whose bottom numbers
equal $0$.
Define the polynomial
$$
\mathsf{Para}_{n-1,m-1}^{(r,s)}(q,t):= \sum_{PF\in
\mathrm{Park}_{n-1,m-1}^{(r,s)}}t^{\mathsf{area}(PF)}q^{\mathsf{dinv}(PF)},
$$
and set $\mathsf{Para}_{n-1,m-1}^{(0,0)}(q,t)=0$. Clearly the sum
of all these polynomials as $r$ and $s$ range over their possible
values is equal to $\mathsf{Para}_{n-1,m-1}(q,t)$. With this new
generalization in mind, it is clear that equation \ref{newequiv}
holds true if
$$
(qt)^{m+n-1}\mathsf{Para}_{n-1,m-1}^{(r,s)}(q,t)=\mathsf{Nara}_{m,n}^{(s+1,r)}(q,t).
$$
Our proof of this identity is similar to what Haglund did in
\cite{haglund}.

We will show that
$(qt)^{m+n-1}\mathsf{Para}_{n-1,m-1}^{(r,s)}(q,t)$ also satisfies
the recursion in Theorem \ref{thm:recursion}, with the 4-tuple
$(m,n,r,s)$ replaced with $(n,m,s+1,r)$, i.e.
\begin{align*}
\lefteqn{(qt)^{m+n-1}\mathsf{Para}_{n-1,m-1}^{(r,s)}(q,t)}\\[1em]
&=
t^{n+m-1}q^{s+1}\sum_{k=1}^{m-s-1}\sum_{h=0}^{n-r-1}
q^r
{r+s \brack r}_q {r+k-1 \brack k}_q
(qt)^{m+n-r-s-2}\mathsf{Para}_{n-r-1,m-s-2}^{(h,k-1)}(q,t),
\end{align*}
which simplifies to
\begin{equation} \label{eq:PFrecursion}
\mathsf{Para}_{n-1,m-1}^{(r,s)}(q,t)
=
t^{m+n-r-s-2}\sum_{h=0}^{n-r-1}\sum_{k=1}^{m-s-1}
{r+s \brack r}_q {r+k-1 \brack k}_q
\mathsf{Para}_{n-r-1,m-s-2}^{(h,k-1)}(q,t).
\end{equation}
The initial conditions are
$$
\mathsf{Para}_{n-1,m-1}^{(r,m-1)}(q,t)
=\frac{\mathsf{Nara}_{m,n}^{(m,r)}(q,t)}{(qt)^{m+n-1}}=\left\{\begin{array}{cc}
{m+n-2 \brack n-1}_q & \text{if $r=n-1$,} \\
0 & \text{if $r<n-1$,} \\
\end{array}\right.
$$
and
$$
\mathsf{Para}_{0,m-1}^{(r,s)}(q,t)=\frac{\mathsf{Nara}_{m,1}^{(s+1,0)}(q,t)}{(qt)^m}=0
\text{ for }s<m-1.
$$

In order to see how the recursion \ref{eq:PFrecursion} works for parking functions,
we first make a simplification.
It follows immediately from the
definitions that, since the reading word of the parking functions
we are interested in is a shuffle of the sequences
$A=(1,2,\dots,n-1)$ and $B=(n,n+1,\dots,n+m-2)$, the pairs of
dominoes with both upper numbers in $A$ or both in $B$ do not
contribute to the dinv. The only pairs that contribute are the
ones where one of the upper numbers is in $A$ and the other is in
$B$. Since all the elements of $A$ are smaller than all the
elements of $B$, we can simply consider dominoes in which the
upper number is $1$ (if the corresponding element was in $A$) or
$2$ (if the corresponding element was in $B$), with the dinv
defined in the same way.

For example the parking function $PF\in \mathrm{Park}_{9,9}^{(3,1)}$
$$\pfsize{\begin{array}{|c|c|c|c|c|c|c|c|c|c|c|c|c|c|c|c|c|} \hline
  3 & 13 & 6 & 15 & 8 & 7 & 16 & 12 & 5 & 14 & 9 & 2 & 11 & 1 & 10 & 4 \\
  0 & 1 & 1 & 2 & 2 & 2 & 3 & 1 & 1 & 2 & 0 & 0 & 1 & 0 & 1 & 1 \\
  \hline \end{array}},$$
whose reading word is
$$\sigma(PF)=16\,\, 14\,\, 7\,\, 8\,\, 15\,\, 4\,\, 10\,\, 11\,\,
    5\,\, 12\,\, 6\,\, 13\,\, 1\,\, 2\,\, 9\,\, 3,$$
would correspond to
$$\pfsize{\begin{array}{|c|c|c|c|c|c|c|c|c|c|c|c|c|c|c|c|}\hline
  1 & 2 & 1 & 2 & 1 & 1 & 2 & 2 & 1 & 2 & 2 & 1 & 2 & 1 & 2 & 1\\
  0 & 1 & 1 & 2 & 2 & 2 & 3 & 1 & 1 & 2 & 0 & 0 & 1 & 0 & 1 & 1\\
  \hline \end{array}},$$
whose reading word is
$$\sigma(PF)=2\,\, 2\,\, 1\,\, 1\,\, 2\,\, 1\,\, 2\,\, 2\,\, 1\,\,
    2\,\, 1\,\, 2\,\, 1\,\, 1\,\, 2\,\, 1.  $$
In both cases the dinv is $32$, and the area is $18$.

\textbf{\textsl{Using this identification}}, what we do is the
following: given an element $PF$ of
$\mathrm{Park}_{n-1,m-1}^{(r,s)}$, we remove the dominoes whose
lower number is $0$, and we decrease the lower number of the
remaining dominoes by $1$, keeping them in the given order.

In our example, applying this procedure to $PF$ we get
$$\pfsize{\begin{array}{|c|c|c|c|c|c|c|c|c|c|c|c|c|c|c|c|} \hline
  2 & 1 & 2 & 1 & 1 & 2 & 2 & 1 & 2 & 2 & 2 & 1\\
  0 & 0 & 1 & 1 & 1 & 2 & 0 & 0 & 1 & 0 & 0 & 0\\
  \hline \end{array}}.$$
In doing this, observe that we will always get a parking function which
starts with a domino $\onepf{2}{0}$,
which is always followed by a domino with lower entry $0$, since
$PF$ cannot contain a sequence of three consecutive dominoes
with strictly increasing lower numbers.
This first domino contributes $0$ to both area and dinv and we can therefore remove it.
In doing this we get an element of
$\mathrm{Park}_{n-r-1,m-s-2}^{(h,k-1)}$, where $h$ is the number
of $\onepf{1}{1}$ dominoes in $PF$, and $k$ is the number of
$\onepf{2}{1}$ dominoes in $PF$.

\begin{Rem}

Conversely, given an element of
$\mathrm{Park}_{n-r-1,m-s-2}^{(h,k-1)}$, we can prepend it with a
$\onepf{2}{0}$ domino, increase all the lower numbers by 1, and
then insert $r$ $\onepf{1}{0}$ dominoes and $s$ $\onepf{2}{0}$
dominoes. This gives us an element of
$\mathrm{Park}_{n-1,m-1}^{(r,s)}$. In doing so, we are forced to
put a $\onepf{1}{0}$ in front of the first $\onepf{2}{1}$ which we
just prepended. Other inserted dominoes must satisfy the following
constraints: a $\onepf{1}{0}$ domino is followed by a domino in
$\left\{\onepf{1}{0},\onepf{2}{0},\onepf{2}{1}\right\}$, if any,
and a $\onepf{2}{0}$ domino is followed by a domino in
$\left\{\onepf{1}{0},\onepf{2}{0}\right\}$, if any.
\end{Rem}

Let us now look at how the area and the dinv change with respect to the
operation that we have just described.
The area of the new parking function is equal to the area of $PF$
minus $(n-1-r)+(m-1-s)$, which is taken care of by the factor
$t^{n-r+m-s-2}$ on the right hand side of \eqref{eq:PFrecursion}.

The dinv is going to be the dinv of $PF$ minus the dinv created
by the dominoes that we have removed.
First of all, there are the pairs of dominoes $\onepf{2}{0}$
and $\onepf{1}{0}$ in $PF$, whose relative position creates dinv: this
dinv is taken care of by the factor ${r+s \brack r}_q$
on the right hand side of \eqref{eq:PFrecursion}.
Then there is the dinv created by the dominoes $\onepf{2}{1}$
in $PF$ with the dominoes $\onepf{1}{0}$ in $PF$: this
is taken care by the factor ${r+k-1 \brack k}_q$,
since the first domino $\onepf{2}{1}$ is necessarily preceded by a
$\onepf{1}{0}$.

The initial conditions are obvious.
This completes the proof of Theorem \ref{thm:main}.

\end{document}